\patchcmd{\ps@pprintTitle}{\footnotesize\itshape
       Preprint submitted to \ifx\@journal\@empty Elsevier
       \else\@journal\fi\hfill\today}{\relax}{}{}
\newtheorem{theorem}{Theorem}[section]
\newtheorem{lemma}[theorem]{Lemma}
\newtheorem{corollary}[theorem]{Corollary}
\newtheorem{prop}[theorem]{Proposition}
\newtheorem*{con7*}{Conjecture 7*}
\newtheorem*{Remark}{Remark}
\theoremstyle{definition}
\newtheorem{definition}[theorem]{Definition}
\newtheorem*{th31}{Theorem 3.1}
\newtheorem*{th49}{Theorem 4.9}
\newtheorem*{th43}{Theorem 4.3}
\newtheorem*{th45}{Theorem 4.5}
\theoremstyle{remark}
\numberwithin{equation}{theorem}
\journal{}
\begin{document}

\begin{frontmatter}

\title{ On the second stable homotopy group of the Eilenberg-Maclane space}

 \author[IISER TVM]{A. Antony}
\ead{ammu13@iisertvm.ac.in}
\author[IISER TVM]{G. Donadze}
\ead{gdonad@gmail.com}
\author[IISER TVM]{V. Prasad}
\ead{vishnuprasad@iisertvm.ac.in}
\author[IISER TVM]{V.Z. Thomas\corref{cor1}}
\address[IISER TVM]{School of Mathematics,  Indian Institute of Science Education and Research Thiruvananthapuram,\\695016
Kerala, India}
\ead{vthomas@iisertvm.ac.in}
\cortext[cor1]{Corresponding author. \emph{Phone number}: +91 9744041060.}

\begin{abstract}
We prove that for a finitely generated group $G$, the second stable homotopy group $\pi_2^S(K(G,1))$ of the Eilenberg-Maclane space $K(G,1)$ is completely determined by the Schur multiplier $H_2(G)$. We also prove that the second stable homotopy group $\pi_2^S(K(G,1))$ is equal to the Schur multiplier $H_2(G)$ for a torsion group $G$ with no elements of order $2$ and show that for such groups, $\pi_2^S(K(G,1))$ is a direct factor of $\pi_{3}(SK(G,1))$, where $S$ denotes suspension and $\pi_2^S$ the second stable homotopy group. We compute $\pi_{3}(SK(G,1))$ and $\pi_2^S(K(G,1))$ for symmetric, alternating, dihedral,  general linear groups over finite fields and some infinite general linear groups $G$. We also obtain a bound for $\pi_2^S(K(G,1))$ for of all finite groups $G$.
\end{abstract}

\begin{keyword}
Eilenberg-Maclane space \sep second stable homotopy group  \sep Schur Multiplier \sep group actions \sep nonabelian tensor square
\MSC[2010] 14C35 \sep 14F35 \sep 19C09 \sep 19D55 \sep 20D06 \sep 20J05 \sep 55P20 \sep 55P40  \sep 55Q05  \sep 55Q10 
\end{keyword}

\end{frontmatter}

\section{Introduction}

One of the aims of this paper is to compute the second stable homotopy group of  $K(G,1)$ and to compute $\pi_3(SK(G,1))$.
For this purpose, first we obtain some structural results for the non-abelian tensor square of groups. R. Brown and J.-L. Loday introduced the nonabelian tensor product $G\otimes H$ for a pair of groups $G$ and $H$ in \cite{BL1} and \cite{BL2} in the context of an application in homotopy theory, extending the ideas of J.H.C. Whitehead in \cite{W}. The major contribution of the paper \cite{BL2} is to prove a higher homotopy van Kampen theorem, which gives the transition from topology to algebra. More information can be found in \cite{BHR}. The further contribution was to define and apply the nonabelian tensor product. A special case, the nonabelian tensor square, already appeared in the work of R.K. Dennis in \cite{RKD}. The non-abelian tensor product of groups is defined for a pair of groups that act on each other provided the actions satisfy the compatibility conditions of Definition \ref{D:1.1} below. Note that we write conjugation on the left, so $^gg'=gg'g^{-1}$ for $g,g'\in G$ and
$^gg'\cdot g'^{-1}=[g,g']$ for the commutator of $g$ and $g'$.

\begin{definition}\label{D:1.1}
Let $G$ and $H$ be groups that act on themselves by conjugation and each of which acts on the other. The mutual actions are said to be compatible if
\begin{equation}
\tag{1.1.1}   ^{^h g}h'=\; ^{hgh^{-1}}h' \;and\; ^{^g h}g'=\ ^{ghg^{-1}}g' \;\mbox{for \;all}\; g,g'\in G, h,h'\in H.
\end{equation}
\end{definition}

\begin{definition}\label{D:1.2}
If $G$ and $H$ are groups that act compatibly on each other, then the nonabelian tensor product $G\otimes H$ is the group generated by the symbols $g\otimes h$ for $g\in G$ and $h\in H$ with relations
\begin{equation}\label{E:1.2.1}
gg'\otimes h=(^gg'\otimes \;^gh)(g\otimes h),    \tag{1.2.1}
\end{equation}
\begin{equation}\label{E:1.2.2}
g\otimes hh'=(g\otimes h)(^hg\otimes \;^hh'),    \tag{1.2.2}
\end{equation}
\noindent for all $g,g'\in G$ and $h,h'\in H$.
\end{definition}

The special case where $G=H$, and all actions are given by conjugation, is called the tensor square $G\otimes G$. The tensor square of a group is always defined.

There exists a homomorphism $\kappa : G\otimes G \rightarrow G^{\prime}$ sending $g\otimes h$ to $[g,h]$. Set $J(G)=\ker(\kappa)$. Its topological interest is the formula $J(G)\cong \pi_{3}(SK(G,1))$, where $SK(G,1)$ is the suspension of $K(G,1)$. The group $J(G)$ lies in the centre of $G\otimes G$.

Let $\Delta(G)$ denote the subgroup of $J(G)$ generated by the elements \newline $(x\otimes y)(y\otimes x)$ for $x,y\in G$. The symmetric product of $G$ is then defined as $G\tilde{\otimes} G = (G\otimes G)/\Delta(G)$. We set $\tilde{J}(G)= J(G)/\Delta(G)$. It is shown in \cite{BL2} that $\tilde{J}(G)\cong \pi_{4}(S^{2}K(G,1))= \pi^{s}_{2}(K(G,1))$.

Let $\nabla (G)$ denote the subgroup of $J(G)$ generated by the elements $x\otimes x$ for $x\in G$. The exterior square of $G$ is defined as $G\wedge G= (G\otimes G)/\nabla (G)$. We set $J(G)/\nabla(G)=M(G)$, which is otherwise known as the Schur multiplier of $G$. It has been shown in \cite{M} that $M(G)\cong H_{2}(G)$, the second homology group of $G$.
The importance and relation of nonabelian tensor product to other constructions can be captured in the following commutative diagram given in \cite{BL2}

\begin{equation*}\label{D:1}
\xymatrix@+20pt{
0\ \ar@{->}[r]
&\pi_{3}(SK(G,1))\ar@{->}[r]
\ar@{->}[d]
 &G\otimes G\ar@{->}[r]
\ar@{->}[d]
&G'\ar@{->}[r]
\ar@{->}[d]
&1\hspace{12mm} \\
0\ \ar@{->}[r]
&\pi_{2}^{S}(K(G,1))\ar@{->}[r]
\ar@{->}[d]
 &G\widetilde{\otimes} G\ar@{->}[r]
\ar@{->}[d]
&G'\ar@{->}[r]
\ar@{->}[d]
&1\hspace{8mm}(\ref{D:1}) \\
0\ \ar@{->}[r]
&H_2(G)\ar@{->}[r]
 &G\wedge G\ar@{->}[r]
&G'\ar@{->}[r]
&1\hspace{12mm} \\
}\end{equation*}

Now we will briefly describe how the paper is organised. In section two, we begin with some preparatory results and prove that when $G'$ has a complement in $G$, then $\Delta(G)\cong \Delta(G_{ab})$ and $\nabla(G)\cong \nabla(G_{ab})$.

The authors of \cite{BJR} give a formula for $G\otimes G$ when $G=H\times K$ in terms of the factors $H$ and $K$. In section 3, we want to do the same when $G=N\rtimes H$. We also express $G\wedge G$ as a semidirect product, and we express $\nabla(G)$ and $\Delta(G)$ as a direct product.
Our contribution is the following
\begin{th31}
Let $G=N\rtimes H$, then the following statements hold:
\begin{itemize}
 \item[(i)] $G \otimes G \cong K_1 \rtimes (H\otimes H) $, where $K_1$ is the normal subgroup of $G\otimes G$ generated by $\{g\otimes n, n_1\otimes g_1|g, g_1\in G, n, n_1\in N\}$.
  \item[(ii)] $G \wedge G \cong K_2 \rtimes (H \wedge H)$, where $K_2$ is the normal subgroup of $G\wedge G$ generated by $\{g\wedge n|g\in G, n\in N\}$.
 \item[(iii)] $\nabla (G) \cong K_3 \times \nabla(H)$, where $K_3$ is the normal subgroup of $\nabla (G)$ generated by $ \{ (g\otimes n)(n\otimes g),(n_1\otimes n_1) | g \in G, n, n_1 \in N \}$.
\end{itemize}
\end{th31}

The authors of \cite{MW} study homotopy groups of the suspensions of classifying spaces of groups. In section 4, we prove that the second stable homotopy group of $K(G,1)$ is completely determined by the Schur multiplier for a finitely generated group $G$. We also provide a direct product decomposition for third homotopy group of the suspension of $K(G,1)$ with the Schur multiplier as one of the factors. If $G_{ab}$ is finitely generated, then the authors of \cite{BFM} prove that $G\otimes G\cong \nabla(G)\times (G\wedge G)$, when $G'$ has a complement in $G$ and also when $G_{ab}$ has no elements of order 2.  Our aim in this section is to generalize this result. We obtain the above direct product description by constructing an explicit splitting map. In particular, we obtain the following results
\begin{th43}
If $G$ is finitely generated group, then $G\ \widetilde{\otimes}\ G\cong (G\wedge G) \times \nabla(G)/{\Delta(G)}$. In particular,  $\pi_2^{S}(K(G,1))\cong H_2(G)\times (\mathbb{Z}/2\mathbb{Z})^{r+k}$, where $r$ is the rank of $G_{ab}$ and $k$ is the number of cyclic groups of even order in the decomposition of $G_{ab}$
\end{th43}

\begin{th45}
Let $k$ and $n$ be natural numbers. If $(g\otimes g)^k=1$ for all $g\in G$ and for any odd integer $k$ or there exist an even integer $n$ such that $G$ has $n^{th}$ roots, then in both cases $\pi_{2}^S(K(G,1))\cong H_2 (G)$.
\end{th45}

\begin{th49}
 If $G$ is a torsion group with no elements of order $2$, then $G\otimes G \cong \nabla(G)\times (G\wedge G)$. In particular, $\pi_2^S(K(G,1))\cong H_2(G)$ and $\pi_3(SK(G,1))\cong H_2(G)\times \nabla(G)$. 
\end{th49}

In section 5, we give some applications of the results proved in this paper. In particular, we compute $\pi_3(SK(G,1))$ and $\pi_2^S(K(G,1))$ for symmetric groups, alternating groups, general linear groups over finite fields and infinite general linear groups.

\section{Preparatory Results}

The following proposition can be found in \cite{BJR}, we record it here for easy access and because we use it extensively.

\begin{prop}
Let $G$ be a group, then the following hold:
\begin{itemize}
\item[(i)]\label{L:2.1}
$G$ acts trivially on $J(G)$. In particular $G$ acts trivially on both $\nabla (G)$ and $\Delta (G)$.

\item[(ii)]\label{L:2.2}
 $J(G)$ is contained in the centre of $ G \otimes G$.

\item[(iii)]\label{L:2.3}
Let $G$ be a group. We have $ \Delta (G) \subseteq \nabla (G)$.

\item[(iv)]\label{L:2.4}
Let $G$ be a group. Then $x\otimes x = 1$ if $x \in G'$.

\end{itemize}
\end{prop}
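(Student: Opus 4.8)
The plan is to deduce all four parts from a single description of the $G$-action on $G\otimes G$, the action formula
\[
{}^{g}w=\bigl(g\otimes\kappa(w)\bigr)\,w\qquad(g\in G,\ w\in G\otimes G),
\]
together with the two identities that make $\kappa\colon G\otimes G\to G'$ a crossed module. First I would record these crossed-module identities, both of which follow directly from the defining relations (1.2.1) and (1.2.2) by checking them on the generators $c\otimes d$ (where $\kappa(c\otimes d)=[c,d]$ and ${}^{g}(c\otimes d)={}^{g}c\otimes{}^{g}d$): the equivariance $\kappa({}^{g}w)={}^{g}\kappa(w)$, and the Peiffer identity $w\,w'\,w^{-1}={}^{\kappa(w)}w'$ valid for all $w,w'\in G\otimes G$.

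To establish the action formula I would prove its generator case
\[
{}^{g}(c\otimes d)=\bigl(g\otimes[c,d]\bigr)(c\otimes d),
\]
which is the one genuine calculation in the argument: expanding $g\otimes[c,d]=g\otimes\bigl(cdc^{-1}d^{-1}\bigr)$ by repeated application of (1.2.2) and using $(c\otimes d)^{-1}={}^{d}c\otimes d^{-1}$, one checks that the right-hand side equals ${}^{g}c\otimes{}^{g}d$. (Applying $\kappa$ gives the valid identity ${}^{g}[c,d]=[g,[c,d]]\,[c,d]$, a useful consistency check.) The general action formula then follows by induction on the length of $w$ as a word in the generators $c\otimes d$ (a set closed under inversion, since $(c\otimes d)^{-1}={}^{d}c\otimes d^{-1}$): given the formula for $w$ and for $w_{1}$, one moves $g\otimes\kappa(w_{1})$ to the left past $w$ by the Peiffer identity and recombines the two tensor factors by (1.2.2) to obtain ${}^{g}(ww_{1})=\bigl(g\otimes\kappa(ww_{1})\bigr)ww_{1}$. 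Part (i) is now immediate: if $w\in J(G)=\ker\kappa$ then $\kappa(w)=1$, so the action formula reads ${}^{g}w=(g\otimes 1)w=w$; thus $G$ acts trivially on $J(G)$, and since $\nabla(G),\Delta(G)\subseteq J(G)$ (because $\kappa(x\otimes x)=[x,x]=1$ and $\kappa\bigl((x\otimes y)(y\otimes x)\bigr)=[x,y][y,x]=1$) it acts trivially on these as well.

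Parts (ii) and (iv) follow from the commutator formula
\[
[w,w']=\kappa(w)\otimes\kappa(w')\qquad(w,w'\in G\otimes G),
\]
which drops out in one line: by the Peiffer identity $[w,w']={}^{\kappa(w)}w'\cdot(w')^{-1}$, and then the action formula (with $g=\kappa(w)$) turns this into $\bigl(\kappa(w)\otimes\kappa(w')\bigr)w'(w')^{-1}=\kappa(w)\otimes\kappa(w')$. For (ii), taking $w\in J(G)$ gives $[w,w']=1\otimes\kappa(w')=1$ for every $w'$, so $J(G)$ is central; alternatively (ii) is immediate from the Peiffer identity alone, since $\kappa(w)=1$ forces $w\,w'\,w^{-1}={}^{1}w'=w'$. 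For (iv), given $x\in G'$ I would use surjectivity of $\kappa$ onto $G'$ to write $x=\kappa(w)$; then $x\otimes x=\kappa(w)\otimes\kappa(w)=[w,w]=1$.

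For (iii) I would work in the exterior square $G\wedge G=(G\otimes G)/\nabla(G)$, where the kernel of the projection is exactly $\nabla(G)$, so it suffices to show $(x\wedge y)(y\wedge x)=1$ for all $x,y$. Using $z\wedge z=1$ for every $z$ together with the relations (1.2.1)--(1.2.2), I would expand $(ab)\wedge(ab)=1$: writing $b'={}^{a}b$ one finds $(ab)\wedge(ab)=(b'\wedge a)(a\wedge b')$, whence $(b'\wedge a)(a\wedge b')=1$. As $b'={}^{a}b$ runs over all of $G$ when $b$ does, this gives $(c\wedge a)(a\wedge c)=1$ for all $a,c\in G$, i.e.\ $(x\otimes y)(y\otimes x)\in\nabla(G)$, which is exactly $\Delta(G)\subseteq\nabla(G)$. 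The main obstacle throughout is the single generator-level computation establishing the generator case of the action formula; it is the only place where the defining relations must be pushed through by hand, and the only subtlety is bookkeeping of factor order under the left-conjugation convention ${}^{g}g'=gg'g^{-1}$. Once that identity is in place, (i)--(iv) are formal.
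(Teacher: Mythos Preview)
Your proof is correct. The paper does not actually give its own argument for this proposition; it simply records the statement and cites \cite{BJR}, so there is no in-paper proof to compare against directly.

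That said, it is worth noting how your argument relates to the source. In \cite{BJR} the identities you use --- in particular $a\otimes[c,d]={}^{a}(c\otimes d)(c\otimes d)^{-1}$ (your ``action formula'' on generators) and the Peiffer identity $(g\otimes h)(g'\otimes h')(g\otimes h)^{-1}={}^{[g,h]}(g'\otimes h')$ --- are established individually by direct manipulation of the relations (1.2.1)--(1.2.2), and parts (i)--(iv) are read off from the resulting list of formulas. You instead organise everything around the crossed-module viewpoint: one equivariance identity, one Peiffer identity, and one ``action formula'' ${}^{g}w=(g\otimes\kappa(w))w$, from which the global commutator formula $[w,w']=\kappa(w)\otimes\kappa(w')$ falls out in a line. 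This packaging is cleaner and makes the logical dependencies transparent (e.g.\ (iv) becomes the single equation $x\otimes x=[w,w]=1$ once $x=\kappa(w)$). The underlying computations are the same as in \cite{BJR}; what you gain is a uniform narrative rather than a catalogue of separate identities. Your treatment of (iii), expanding $(ab)\wedge(ab)=1$ in $G\wedge G$, is also the standard one.
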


 \begin{lemma}\label{L:2.5}
  Let $G$ be a group. Then $(x\otimes a)(a\otimes x)=1$ for all $x\in G'$ and $a\in G$.
 \end{lemma}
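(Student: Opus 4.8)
The plan is to prove the clean statement that, for each fixed $a\in G$, the map
\[
z_a\colon G\longrightarrow J(G),\qquad z_a(x)=(x\otimes a)(a\otimes x),
\]
is a group homomorphism; the lemma then follows at once, because $J(G)$ is abelian, so $z_a$ annihilates the commutator subgroup $G'$, which says exactly that $(x\otimes a)(a\otimes x)=1$ for all $x\in G'$ and $a\in G$.

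The only facts I would use are the preceding Proposition, parts (i) and (ii): $z_a(x)$ is by construction a generator of $\Delta(G)\subseteq J(G)$, hence central in $G\otimes G$ and fixed by the $G$-action. One also notes $1\otimes a=a\otimes 1=1$, so $z_a(1)=1$. For multiplicativity, expand using the defining relations (1.2.1) and (1.2.2):
\[
z_a(x_1x_2)=(x_1x_2\otimes a)(a\otimes x_1x_2)=\bigl({}^{x_1}x_2\otimes{}^{x_1}a\bigr)\,(x_1\otimes a)(a\otimes x_1)\,\bigl({}^{x_1}a\otimes{}^{x_1}x_2\bigr).
\]
Since the middle factor $(x_1\otimes a)(a\otimes x_1)=z_a(x_1)$ is central, it may be pulled to the front, and the two surviving factors recombine — using that $G$ acts trivially on $\Delta(G)$ — as
\[
\bigl({}^{x_1}x_2\otimes{}^{x_1}a\bigr)\bigl({}^{x_1}a\otimes{}^{x_1}x_2\bigr)={}^{x_1}\!\bigl(z_a(x_2)\bigr)=z_a(x_2).
\]
Hence $z_a(x_1x_2)=z_a(x_1)z_a(x_2)$, which completes the argument.

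I do not expect a genuine obstacle, but the step to get right is the extraction of the central factor in the first display together with the recombination in the second: both genuinely rely on $J(G)$ being central in $G\otimes G$ and on $G$ acting trivially on $\Delta(G)$, and without these the conjugated terms ${}^{x_1}x_2\otimes{}^{x_1}a$ and ${}^{x_1}a\otimes{}^{x_1}x_2$ cannot be collapsed back to $z_a(x_2)$. A more pedestrian alternative would be to reduce first to $x=[g,h]$ (the product formula above already peels the factors apart) and then expand $[g,h]\otimes a$ and $a\otimes[g,h]$ directly via (1.2.1)–(1.2.2), but that route is computationally heavier and the homomorphism argument renders it unnecessary.
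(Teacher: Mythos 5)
Your proof is correct, and it takes a genuinely different route from the paper's. The paper quotes Proposition 2.3(d) of \cite{BL2} to reduce to a single commutator $x=[g,h]$ and then finishes with the crossed-module-type identities $[g,h]\otimes a=(g\otimes h)\,{}^{a}(g\otimes h)^{-1}$ and $a\otimes[g,h]={}^{a}(g\otimes h)\,(g\otimes h)^{-1}$, so that the product telescopes to $1$ in one line. You instead show, directly from the defining relations (\ref{E:1.2.1})--(\ref{E:1.2.2}) together with the centrality of $J(G)$ and the triviality of the $G$-action on it, that $x\mapsto (x\otimes a)(a\otimes x)$ is a homomorphism of $G$ into the abelian group $J(G)$ (its values even lie in $\Delta(G)$), and then conclude because any homomorphism into an abelian group annihilates $G'$; your multiplicativity computation is accurate, including the two places where centrality and the trivial action are genuinely needed to extract $z_a(x_1)$ and to collapse ${}^{x_1}(z_a(x_2))$ to $z_a(x_2)$. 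What your argument buys is self-containedness: it uses only the defining relations and parts (i)--(ii) of the paper's Proposition 2.1, and it makes explicit the reduction step ("it suffices to treat $x=[g,h]$") that the paper delegates to the cited result of \cite{BL2}. What the paper's argument buys is brevity: granted the cited identities, no homomorphism property needs to be verified. The "pedestrian alternative" you sketch at the end is essentially the paper's proof.
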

 \begin{proof}
 In light of Proposition 2.3(d) of \cite{BL2}, it suffices to prove the result for $x=[g,h]$. We have
  \begin{align*}
   &(ghg^{-1}h^{-1} \otimes a)(a \otimes ghg^{-1}h^{-1})\\
    &= (g\otimes h)\;^{a}(g\otimes h)^{-1}\;^{a}(g\otimes h)(g\otimes h)^{-1}\\
    &=1.
  \end{align*}
\end{proof}

\begin{prop}\label{P:2.4}
Let $G$ be a group and set $A=G_{ab}$. If $G'$ has a complement in $G$, then the following hold.
\begin{itemize}
\item[(i)]  $\nabla (G) \cong \nabla (A)$.
\item[(ii)] $\Delta (G) \cong \Delta (A)$
\end{itemize}
\end{prop}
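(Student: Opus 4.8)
The plan is to exploit the semidirect product structure $G \cong G' \rtimes A$ coming from the complement, and push the computation of $\nabla(G)$ and $\Delta(G)$ down to the abelianization. Write $G = G' \rtimes Q$ where $Q \cong A$ is a complement to $G'$. First I would analyze the natural epimorphism $G \otimes G \to A \otimes A$ induced by $G \to A$; since $\nabla$ and $\Delta$ are generated by the symbols $x \otimes x$ and $(x\otimes y)(y\otimes x)$ respectively, this map carries $\nabla(G)$ onto $\nabla(A)$ and $\Delta(G)$ onto $\Delta(A)$. So it suffices to produce an inverse, i.e. to show these restricted maps are injective, equivalently that every generator $x \otimes x$ (resp. $(x\otimes y)(y\otimes x)$) of $\nabla(G)$ (resp. $\Delta(G)$) can be rewritten, using the tensor relations (\ref{E:1.2.1}) and (\ref{E:1.2.2}) together with Proposition 2.1, purely in terms of images of elements of $Q$.

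The key computational step is a \emph{reduction to the complement}. Writing an arbitrary $x \in G$ as $x = c q$ with $c \in G'$, $q \in Q$, I would expand $x \otimes x = cq \otimes cq$ using the defining relations, and use Proposition 2.1(iv) (so $c \otimes c = 1$ since $c \in G'$) together with Lemma \ref{L:2.5} (so $(c \otimes a)(a \otimes c) = 1$ for $a \in G$, $c \in G'$) to kill all the cross terms involving the $G'$-part, modulo elements that are themselves products of the form $(u \otimes v)(v \otimes u)$ with one entry in $G'$. The upshot should be that modulo the relations forced inside $\nabla(G)$, $x \otimes x$ depends only on $q$, and the assignment $q \mapsto x \otimes x$ is compatible with the relations defining $\nabla(A)$; this gives a well-defined map $\nabla(A) \to \nabla(G)$ splitting the epimorphism above. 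The argument for $\Delta$ is analogous but slightly easier, since Lemma \ref{L:2.5} directly handles the cross terms: for $x = c_1 q_1$ and $y = c_2 q_2$ one expands $(x \otimes y)(y \otimes x)$, and every term with a $G'$-entry cancels against its partner by Lemma \ref{L:2.5} (using also that $J(G)$ is central, Proposition 2.1(ii), to freely rearrange factors).

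The main obstacle I anticipate is the bookkeeping in the expansion of $cq \otimes cq$: the relations (\ref{E:1.2.1})–(\ref{E:1.2.2}) introduce conjugated symbols like ${}^{c}q \otimes {}^{c}q$ and ${}^{q}c \otimes \cdots$, and one must carefully verify that all the "garbage" terms either vanish outright (by Proposition 2.1(iv) or Lemma \ref{L:2.5}, noting ${}^g c \in G'$ whenever $c \in G'$) or can be absorbed into $\nabla(G)$, respectively $\Delta(G)$, in a way that does not destroy the correspondence with the abelian side. One has to be attentive that the centrality statements (Proposition 2.1(i)–(ii)) are genuinely needed to commute factors past one another during the simplification, and that the final assignment respects not just the generators but the \emph{relations} of $\nabla(A)$ and $\Delta(A)$ — for $\nabla(A)$ with $A = \mathbb{Z}^r \oplus (\text{finite cyclic pieces})$ this amounts to checking the bilinearity-type identities $(a+b)\otimes(a+b) = \cdots$ survive the reduction. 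Once the two mutually inverse homomorphisms are in hand, (i) and (ii) follow immediately.
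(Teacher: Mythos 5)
Your proposal is essentially the paper's proof: identify $A$ with a complement $Q$ of $G'$, expand the generators of $\nabla(G)$ and $\Delta(G)$ using the relations (\ref{E:1.2.1})--(\ref{E:1.2.2}), and destroy every factor carrying a $G'$-entry by means of $c\otimes c=1$ for $c\in G'$, Lemma \ref{L:2.5}, and the trivial-action/centrality statements of Proposition 2.1. One small correction of emphasis: the cancellation is exact, not merely ``modulo'' further symmetric elements --- the computation gives $cq\otimes cq=q\otimes q$ and $(c_1q_1\otimes c_2q_2)(c_2q_2\otimes c_1q_1)=(q_1\otimes q_2)(q_2\otimes q_1)$ on the nose, so $\nabla(G)$ and $\Delta(G)$ are generated by elements coming from the complement. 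The only step I would redo is your construction of the inverse map: defining $\nabla(A)\to\nabla(G)$ on the generators $q\otimes q$ and then verifying ``the relations of $\nabla(A)$'' is awkward, both because $\nabla(A)$ is given as a subgroup of $A\otimes A$ rather than by a presentation, and because your proposed check via a decomposition of $A$ into $\mathbb{Z}^r$ plus cyclic pieces tacitly assumes $A$ is finitely generated, which is not a hypothesis of the proposition. No such verification is needed: the inclusion $Q\hookrightarrow G$ induces, by functoriality of the nonabelian tensor square, a homomorphism $Q\otimes Q\to G\otimes G$ carrying $\nabla(Q)$ into $\nabla(G)$ and $\Delta(Q)$ into $\Delta(G)$; your reduction shows these restrictions are surjective, while composing them with the maps induced by $G\to A$ gives the isomorphisms $\nabla(Q)\to\nabla(A)$ and $\Delta(Q)\to\Delta(A)$ coming from $Q\cong A$, whence the maps $\nabla(G)\to\nabla(A)$ and $\Delta(G)\to\Delta(A)$ are injective as well as surjective. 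With that repair your argument coincides with the paper's.
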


\begin{proof}
We have $G = G'B$ where $B \cong A$. Identifying $A$ with its isomorphic copy in $G$, we can write every $g\in G$ as $g=xy$ where $x\in G'$ and $y \in A$.
   
 \begin{itemize}
   \item[(i)]Let $g \in G$. Then $g = xa$, where $x \in G^\prime$ and $a \in A$. Now we have
  \begin{align*}
   g \otimes g
   &=\;^x(a \otimes x)\;^{x^2}(a\otimes a)(x\otimes x)^x(x\otimes a)\hspace{4mm}[(\ref{E:1.2.1})\;\mbox{and}\; (\ref{E:1.2.2})] \\
   &=(x \otimes x)(a \otimes a)(a \otimes x)(x \otimes a),\hspace{6mm}\mbox{[Proposition \ref{L:2.1}]} \\
   &= (a \otimes a).
  \end{align*}
Hence the result follows

  \item[(ii)]
 Let $g = xa, h = yb, x, y\in G', a,b\in A$.
 Using (\ref{E:1.2.1}) and (\ref{E:1.2.2}) we have,

 \begin{equation*}\label{E:2.4.1}
 (xa \otimes yb) = {^{x}(a\otimes y)} {^{xy}(a\otimes b)}(x\otimes y) {^{y}(x\otimes b)}  \tag{2.4.1}
 \end{equation*}
 \begin{equation*}\label{E:2.4.2}
 (yb \otimes xa) = {^{y}(b\otimes x)} {^{yx}(b\otimes a)}(y\otimes x) {^{x}(y\otimes a)}   \tag{2.4.2}
 \end{equation*}

From Lemma\;(\ref{L:2.5}) we get
\begin{equation*}\label{E:2.4.3}
(x\otimes b)(b\otimes x) = 1 \tag{2.4.3}
\end{equation*}

Using (\ref{E:2.4.1}),\;(\ref{E:2.4.2}) and (\ref{E:2.4.3})  we obtain,
\begin{align*}
 &(xa \otimes yb)(yb \otimes xa)\\
&= {^{x}(a\otimes y)} {^{xy}(a\otimes b)}(x\otimes y)  {^{yx}(b\otimes a)}(y\otimes x) {^{x}(y\otimes a)}\\
&= {^{x}(a\otimes y)} {^{xy}(a\otimes b)}  {^{xy}(b\otimes a)} (x\otimes y)(y\otimes x) {^{x}(y\otimes a)}\\
&= (a\otimes y)(y\otimes a)(a\otimes b)(b\otimes a)(x\otimes y)(y\otimes x)\hspace{8mm} \mbox{[Proposition \ref{L:2.1}]}\\
&=(a\otimes b)(b\otimes a).
\end{align*}
Hence the result follows.
  \end{itemize}
\end{proof}

\section{Tensor square of semi-direct product of groups}\label{S:3}
 In \cite{BJR}, the authors give a description of the tensor square of a direct product of groups. In this section we do the same for semi-direct product.
\begin{theorem}\label{T:3.1}
 Let $G=N\rtimes H$, then the following statements hold:
\begin{itemize}
 \item[(i)] $G \otimes G \cong K_1 \rtimes (H\otimes H) $, where $K_1$ is the normal subgroup of $G\otimes G$ generated by $\{g\otimes n, n_1\otimes g_1|g, g_1\in G, n, n_1\in N\}$.
  \item[(ii)] $G \wedge G \cong K_2 \rtimes (H \wedge H)$, where $K_2$ is the normal subgroup of $G\wedge G$ generated by $\{g\wedge n|g\in G, n\in N\}$.
 \item[(iii)] $\nabla G \cong K_3 \times \nabla(H)$, where $K_3 = < (g\otimes n)(n\otimes g),(n_1\otimes n_1) | g \in G, n, n_1 \in N >$.
 \item[(iv)] $\Delta G \cong K_4 \times \Delta (H)$, where $K_4 = \ker (p\otimes p)|_{\Delta G}$ .
 \item[(v)] $G \widetilde{\otimes} G \cong K_5 \rtimes (H \widetilde{\otimes} H)$ where, $K_5 =  \;< (g\otimes n)\Delta G | g \in G, n \in N >$
\end{itemize}
 \end{theorem}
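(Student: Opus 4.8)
The plan is to base all five parts on the retraction $p\colon G=N\rtimes H\twoheadrightarrow H$ and the inclusion $\iota\colon H\hookrightarrow G$, which satisfy $p\iota=\id_H$. By functoriality of the nonabelian tensor square for the compatible conjugation actions, these induce homomorphisms $p\otimes p\colon G\otimes G\to H\otimes H$ and $\iota\otimes\iota\colon H\otimes H\to G\otimes G$ with $(p\otimes p)(\iota\otimes\iota)=\id$; hence $\iota\otimes\iota$ is injective, its image is a copy of $H\otimes H$, and $G\otimes G=\ker(p\otimes p)\rtimes(\iota\otimes\iota)(H\otimes H)$. For (i) it then suffices to identify $\ker(p\otimes p)$ with $K_1$. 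The inclusion $K_1\subseteq\ker(p\otimes p)$ is immediate, since $p(n)=1$ gives $p(g)\otimes p(n)=p(n)\otimes p(g)=1$ and $\ker(p\otimes p)$ is normal. For the reverse inclusion I would first prove $G\otimes G=K_1\cdot(\iota\otimes\iota)(H\otimes H)$; then any $z\in\ker(p\otimes p)$ can be written $z=k\,(\iota\otimes\iota)(w)$ with $k\in K_1$, and applying $p\otimes p$ forces $w=1$, so $z=k\in K_1$.

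The core of (i) --- and the step I expect to be the main obstacle --- is showing that every basic tensor $g_1\otimes g_2$ lies in $K_1\cdot(\iota\otimes\iota)(H\otimes H)$, for which one repeatedly uses the remark that any basic tensor having an entry in $N$ lies in $K_1$. Writing $g_i=n_ih_i$ with $n_i\in N,\ h_i\in H$ and applying $(\ref{E:1.2.1})$ and then $(\ref{E:1.2.2})$, every error term carries an $N$-entry and one is left, modulo $K_1$, with $n_1h_1\otimes n_2h_2\equiv{}^{n_1n_2}h_1\otimes{}^{n_1n_2}h_2={}^{n_1n_2}(h_1\otimes h_2)$, the conjugator collapsing to $n_1n_2$ because $({}^{n_1}n_2)\,n_1=n_1n_2$. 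It remains to see that ${}^{m}(h_1\otimes h_2)\equiv h_1\otimes h_2\pmod{K_1}$ for $m\in N$ and $h_i\in H$; here the trick is to put ${}^{m}h_i$ into the normal form ${}^{m}h_i=h_id_i$ with $d_i=h_i^{-1}mh_im^{-1}\in N$, so that expanding $h_1d_1\otimes h_2d_2$ by $(\ref{E:1.2.1})$ and $(\ref{E:1.2.2})$ produces precisely $h_1\otimes h_2$ together with basic tensors of the form $({}^{h}d)\otimes(-)$ or $(-)\otimes({}^{h}d)$ with $d\in N,\ h\in H$ --- and since $N\trianglelefteq G$ these conjugates stay in $N$, so all such factors are in $K_1$. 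Finding this ``$N$ on the right'' normal form is what makes the error terms vanish modulo $K_1$; the rest is bookkeeping with the relations of Definition \ref{D:1.2}.

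Parts (ii) and (v) are obtained by pushing (i) through the quotient maps $G\otimes G\to G\wedge G$ and $G\otimes G\to G\widetilde{\otimes}G$. Applying these to the split sequence and using that $p\otimes p$ maps $\nabla(G)$ onto $\nabla(H)$ and $\Delta(G)$ onto $\Delta(H)$ (as $p$ is onto), together with part (i), yields that $\ker(p\wedge p)$ is the image of $K_1$ in $G\wedge G$ and $\ker(p\widetilde{\otimes}p)$ is the image of $K_1$ in $G\widetilde{\otimes}G$. Since $(g\otimes n)(n\otimes g)$ lies in $\Delta(G)\subseteq\nabla(G)$ (Proposition \ref{L:2.3}), in $G\wedge G$ one has $n\wedge g=(g\wedge n)^{-1}$ and in $G\widetilde{\otimes}G$ one has $(n\otimes g)\Delta(G)=\bigl((g\otimes n)\Delta(G)\bigr)^{-1}$; hence these images of $K_1$ are normally generated by the $g\wedge n$, respectively the $(g\otimes n)\Delta(G)$, i.e.\ they equal $K_2$ and $K_5$.

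For (iii) note that $\nabla(G)\subseteq J(G)$ is central, hence abelian, and $(p\otimes p)|_{\nabla(G)}$ is a surjection onto $\nabla(H)$ split by $\iota\otimes\iota$, so by part (i) one gets $\nabla(G)\cong\bigl(\nabla(G)\cap K_1\bigr)\times\nabla(H)$. Projecting the generators of $\nabla(G)$ shows that $\nabla(G)\cap K_1$ is generated by the elements $(g\otimes g)(h\otimes h)^{-1}$ with $g=nh$, and expanding $nh\otimes nh$ by $(\ref{E:1.2.1})$ and $(\ref{E:1.2.2})$, as in the proof of Proposition \ref{P:2.4}(i), gives $nh\otimes nh=({}^nh\otimes n)\cdot{}^{n^2}(h\otimes h)\cdot(n\otimes n)\cdot(n\otimes{}^nh)$. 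Since $G$ acts trivially on $\nabla(G)$ (Proposition \ref{L:2.1}), the middle factor is $h\otimes h$, and since $\nabla(G)$ is central we may move the two $\nabla$-terms past each other to get $(nh\otimes nh)(h\otimes h)^{-1}=(n\otimes n)({}^nh\otimes n)(n\otimes{}^nh)\in K_3$; hence $\nabla(G)\cap K_1=K_3$ and (iii) follows. Part (iv) is then immediate: $\Delta(G)\subseteq J(G)$ is central, $(p\otimes p)|_{\Delta(G)}$ is a surjection onto $\Delta(H)$ split by $\iota\otimes\iota$, and $K_4$ is by definition its kernel, so $\Delta(G)\cong K_4\times\Delta(H)$.
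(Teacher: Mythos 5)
Your proposal is correct and follows essentially the same route as the paper: both rest on the split short exact sequences induced by the projection $p$ and the section of $G=N\rtimes H$, with the kernel identifications done via the expansions coming from the relations (\ref{E:1.2.1}) and (\ref{E:1.2.2}). The only differences are mechanical --- you establish $\ker(p\otimes p)=K_1$ by proving $G\otimes G=K_1\cdot(\iota\otimes\iota)(H\otimes H)$ and using the retraction, where the paper instead constructs the explicit inverse $f\colon H\otimes H\to (G\otimes G)/K_1$, and you deduce (ii), (iii), (v) by pushing (i) through the quotients (in fact supplying more detail for the identification $\nabla(G)\cap K_1=K_3$ than the paper does), while the paper repeats the argument of (i) in each case.
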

 \begin{proof}
 \begin{itemize}
\item[(i)]
 Consider the short exact sequence of groups
 \begin{equation*}
\xymatrix@+20pt{
1\ \ar@{->}[r]
& N \ar@{->}[r]
 &G \ar@{->}[r]^{p}
&H \ar@{->}[r]
&1
,
}\end{equation*}
where $p$ is the natural projection. By hypothesis, there exists a map $\alpha : H \to G$ such that $p \circ \alpha = 1_H$. First we claim that
\begin{equation*}
\xymatrix@+20pt{
1\ \ar@{->}[r]
& K_1 \ar@{->}[r]
 &G\otimes G\ar@{->}[r]^{p\otimes p}
&H \otimes H\ar@{->}[r]
&1
,
}\end{equation*}
is an exact sequence. Clearly $K_1 \subseteq \ker(p\otimes p)$, and hence obtain an induced map $p\otimes p : G\otimes G/K_1 \longrightarrow H\otimes H$ defined by $(p\otimes p)((g\otimes h)K_1) = (p(g)\otimes p(h))$. Now we will define an inverse map $f :H\otimes H \longrightarrow (G\otimes G)/K_1$ given by $f(p(g_1) \otimes p(g_2)) = (g_1\otimes g_2)K_1$. Note that if $p(g_i) = p(g_i')$, $i = 1,2$ then  $g_i' = g_{i}n_i$ where $n _i \in N$.
Expansion using \ref{E:1.2.1} and \ref{E:1.2.2} gives,
\begin{align*}
f(p(g_1')\otimes p(g_2'))&= (g_1 n_1 \otimes g_2 n_2)K_1\\
 &=\ ^{g_1}(n_1\otimes g_2)^{g_1g_2}(n_1\otimes n_2)(g_1\otimes g_2)^{g_2}(g_1\otimes n_2)K_1\\
&= (g_1\otimes g_2)K_1
\end{align*}
Observe that $(p\otimes p)\circ f = 1_{H\otimes H}$ and $f\circ (p\otimes p) = 1_{(G\otimes G)/K_1}$. Hence $K_1 = \ker(p\otimes p)$.

 Now we prove that this exact sequence splits on the right and gives a semi-direct product.
 Consider the homomorphism $\alpha \otimes \alpha : H \otimes H \longrightarrow G \otimes G$ defined by $(\alpha \otimes \alpha)(a \otimes b) = \alpha(a) \otimes \alpha(b)$. Note that $(p \otimes p)\circ(\alpha \otimes \alpha) (h_1 \otimes h_2) = (p \circ \alpha)(h_1) \otimes (p \circ \alpha)(h_2) = h_1 \otimes h_2$. Thus the above exact sequence splits, and we have  $G \otimes G \cong K_1 \rtimes (H \otimes H) $.

\item[(ii)]  As in (i), we have an exact sequence

\begin{equation*}
\xymatrix@+20pt{
1\ar@{->}[r]
&K_2\ar@{->}[r]
&G\wedge G\ar@{->}[r]^{p'\otimes p'}
&H \wedge H\ar@{->}[r]
&1
.
}\end{equation*}

where $(p' \otimes p')(a \wedge b) = (p(a) \wedge p(b))$.
Just as above we obtain the required semi-direct product using this short exact sequence.

 \item[(iii)]Considering the restriction of $p \otimes p$ to $\nabla (G)$  and proceeding as in (i) we get the following exact sequence

\begin{equation*}
\xymatrix@+20pt{
1\ \ar@{->}[r]
& K_3 \ar@{->}[r]
 &\nabla (G) \ar@{->}[r]^{p\otimes p}
&\nabla (H) \ar@{->}[r]
&1
.
}\end{equation*}
 which splits on the right and we get a direct product because $\nabla (G)$ is an abelian group.

\item[(iv)] Further, restricting $p\otimes p$ to $\Delta (G)$ we get the following short exact sequence:
\begin{equation*}
\xymatrix@+20pt{
1\ \ar@{->}[r]
&\ker(p\otimes p)|_{\Delta (G)}\ar@{->}[r]
 &\Delta (G) \ar@{->}[r]^{p\otimes p}
&\Delta (H) \ar@{->}[r]
&1
.
}\end{equation*}
and as in the previous cases we can get this sequence to split on the right and there by giving the required direct product.

\item[(v)] Again let us consider the following short exact sequence:
   \begin{equation*}
\xymatrix@+20pt{
1\ \ar@{->}[r]
&\ker(p''\otimes p'')\ar@{->}[r]
 &G\;\widetilde{\otimes}\; G\ar@{->}[r]^{(p''\otimes p'')}
&H \;\widetilde{\otimes}\; H\ar@{->}[r]
&1
.
}\end{equation*}

  where $(p'' \otimes p'')(a \otimes b)\Delta (G) = (p(a) \otimes p(b))\Delta (G)$. Proceeding as in (i), we obtain the result.

 \end{itemize}
 \end{proof}

\begin{prop}\label{T:3.2}
Let $G$ be a group and $N\unlhd G$ be  perfect. If $N$ has a compliment, $G\otimes G \cong j(N\otimes N)\rtimes (G/N\otimes G/N)$, where $j$ is the  natural map from $N\otimes N$ to $G\otimes G$.
\end{prop}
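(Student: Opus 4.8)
The plan is to apply Theorem \ref{T:3.1}(i) with the roles played by $N$ (in place of the normal subgroup) and a fixed complement $H\cong G/N$, so that $G=N\rtimes H$ and $G\otimes G\cong K_1\rtimes(H\otimes H)\cong K_1\rtimes(G/N\otimes G/N)$. It then remains to identify the normal subgroup $K_1$ — which by definition is generated by the elements $g\otimes n$ and $n_1\otimes g_1$ for $g,g_1\in G$, $n,n_1\in N$ — with the image $j(N\otimes N)$ of the natural map $j\colon N\otimes N\to G\otimes G$. Since $N$ is perfect this image is, in fact, the subgroup generated just by the elements $n\otimes n'$ with $n,n'\in N$: indeed $j(N\otimes N)$ is generated by such elements and, conversely, $N=N'$ forces $j$ to have the commutator subgroup $[N\otimes N, N\otimes N]$ hit all of $N\otimes N$ in a way that I will exploit below. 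So the crux is to show $j(N\otimes N)=K_1$.

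First I would show $j(N\otimes N)\subseteq K_1$, which is immediate because each generator $n\otimes n'$ of $j(N\otimes N)$ is one of the generators $n_1\otimes g_1$ of $K_1$ (taking $g_1=n'\in N\subseteq G$). For the reverse inclusion I must show that each generator $g\otimes n$ and $n_1\otimes g_1$ of $K_1$ already lies in $j(N\otimes N)$. Write $g=n_0 h$ with $n_0\in N$, $h\in H$. Using the defining relation (\ref{E:1.2.1}) we expand $g\otimes n=n_0 h\otimes n = {}^{n_0}(h\otimes {}^{n_0^{-1}}\text{-twist})\cdots$; more cleanly, since $N$ is perfect write $n=\prod[a_i,b_i]$ with $a_i,b_i\in N$, so that $g\otimes n$ is a product of terms $g\otimes[a_i,b_i]$, and each such term can be rewritten, via the standard commutator identities for the tensor product (Proposition 2.3 of \cite{BL2}) together with the compatibility of the actions, as a product of symbols of the form ${}^{x}(c\otimes d)$ with $c,d\in N$. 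Because $G$ acts on $N\otimes N$ and $j$ is $G$-equivariant, every such conjugate ${}^x(c\otimes d)$ lies in $j(N\otimes N)$. The argument for $n_1\otimes g_1$ is symmetric, writing $n_1=\prod[a_i,b_i]$ and using (\ref{E:1.2.2}). Hence $K_1\subseteq j(N\otimes N)$, giving equality, and the decomposition $G\otimes G\cong j(N\otimes N)\rtimes(G/N\otimes G/N)$ follows.

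I expect the main obstacle to be the bookkeeping in the reverse inclusion: one must be careful that when a symbol $g\otimes[a,b]$ is expanded using (\ref{E:1.2.1})–(\ref{E:1.2.2}) and the commutator identities, the intermediate terms genuinely stay inside the $G$-subgroup generated by $\{c\otimes d : c,d\in N\}$ rather than merely inside $K_1$ — i.e. that writing $n$ as a product of commutators of elements of $N$ (not of $G$) is what makes the perfectness hypothesis bite. A clean way to organize this is to observe that $j(N\otimes N)$ is a normal subgroup of $G\otimes G$ (its generators are permuted up to the $G$-action, and $G\otimes G$ is generated by symbols $g\otimes g'$ whose action on $N\otimes N$ factors through the conjugation action of $G$), and then to note that the quotient $(G\otimes G)/j(N\otimes N)$ kills every $n\otimes n'$ with $n,n'\in N$; perfectness of $N$ then propagates this to kill every generator of $K_1$ in the quotient, forcing $K_1\subseteq j(N\otimes N)$. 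That reformulation reduces the whole reverse inclusion to the single identity ``$n$ perfect $\Rightarrow$ $g\otimes n\equiv 1$ and $n_1\otimes g_1\equiv 1$ modulo $j(N\otimes N)$'', which is a short computation with (\ref{E:1.2.1}), (\ref{E:1.2.2}) and Proposition \ref{L:2.1}.
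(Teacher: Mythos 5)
Your proposal is correct and follows essentially the same route as the paper: invoke Theorem \ref{T:3.1}(i) for $G=N\rtimes H$ and then use perfectness of $N$ to identify $K_1$ with $j(N\otimes N)$, the key point being the Brown--Loday identity $[a,b]\otimes g=(a\otimes b)\,{}^{g}(a\otimes b)^{-1}$ (and its mirror) together with normality of $N$, which is exactly the computation the paper performs. Your closing reformulation via the quotient $(G\otimes G)/j(N\otimes N)$ is just a repackaging of the same argument, so there is nothing essentially different to compare.
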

\begin{proof}
By Theorem \ref{T:3.1}, we have the following exact sequence which splits on the right.
\begin{equation*}
1\longrightarrow K_1\longrightarrow G\otimes G \longrightarrow G/N\otimes G/N\longrightarrow 1
\end{equation*}
 Let $n\in N,\ g\in G$. Since $N$ is perfect, $n = \prod_{i}[a_i,b_i],$ for some $ a_i, b_i\in N$. We prove for $n = [a_1,b_1]$ and the general case follows easily by induction. Now $n\otimes g = [a_1,b_1]\otimes g = (a_1\otimes b_1) ^{g}(a_1\otimes b_1)^{-1}$.  Hence we have $K_1 = j(N\otimes N)$. Thus we have,
\begin{equation*}
\xymatrix@+20pt{
1\ \ar@{->}[r]
&j(N\otimes N)\ar@{->}[r]
 & G \otimes G\ar@{->}[r]^{(p\otimes p)}
&G/N \otimes G/N \ar@{->}[r]
&1
}\end{equation*}
which splits on the right and our result follows.
\end{proof}

As an easy consequence of our discussion, we obtain the following proposition which is used later for explicit computations.

\begin{prop} \label{P:3.3}
Let $G$ be a group. Let $N \trianglelefteq G$ be perfect. Then the following holds :
\begin{itemize}
\item[(1)] $\pi_3(SK(N,1)) \longrightarrow \pi_3(SK(G,1)) \longrightarrow \pi_3(SK(G/N,1))\longrightarrow 0.$
\item[(2)] $H_2(N) \longrightarrow H_2(G) \longrightarrow H_2(G/N)  \longrightarrow 0.$
\item[(3)] $\pi^s_2(K(N,1))  \longrightarrow \pi^s_2(K(G,1)) \longrightarrow \pi^s_2(K(G/N,1)) \longrightarrow 0.$
\end{itemize}
\end{prop}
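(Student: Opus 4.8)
The plan is to prove all three statements at the level of tensor squares and then read them off via the identifications $\pi_3(SK(G,1))\cong J(G)$, $\pi^s_2(K(G,1))\cong\widetilde J(G)=J(G)/\Delta(G)$ and $H_2(G)\cong M(G)=J(G)/\nabla(G)$ recorded in the introduction, applied to $N$, to $G$, and to $Q:=G/N$. The engine will be the short exact sequence
\begin{equation*}
1\longrightarrow j(N\otimes N)\longrightarrow G\otimes G\xrightarrow{\,p\otimes p\,}Q\otimes Q\longrightarrow 1,
\end{equation*}
where $p\colon G\to Q$ is the projection and $j\colon N\otimes N\to G\otimes G$ is the natural map. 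Surjectivity of $p\otimes p$ is immediate. The first half of the proof of Theorem \ref{T:3.1}(i) uses only that $N=\ker p$, not the existence of a complement, and shows that $\ker(p\otimes p)$ is the normal subgroup generated by $\{g\otimes n,\,n\otimes g\mid g\in G,\,n\in N\}$. Since $N$ is perfect, the rewriting used in the proof of Proposition \ref{T:3.2} expresses each $g\otimes n$ and $n\otimes g$ as a product of elements $(a\otimes b)$ and ${}^{g}(a\otimes b)$ with $a,b\in N$, and because $N\trianglelefteq G$ these all lie in the normal subgroup $j(N\otimes N)$; the reverse inclusion $j(N\otimes N)\subseteq\ker(p\otimes p)$ is clear. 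I would also record that $N=N'\subseteq G'$ (so $G'\twoheadrightarrow Q'$ has kernel $N$) and that $\kappa_G\big(j(N\otimes N)\big)=N'=N$, which together with injectivity of $N'\hookrightarrow G'$ gives $J(G)\cap j(N\otimes N)=j\big(J(N)\big)$.

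For (1): the square relating $\kappa_G$, $\kappa_Q$ and $G'\twoheadrightarrow Q'$ commutes, so $p\otimes p$ restricts to $J(G)\to J(Q)$, which is precisely the map $\pi_3(SK(G,1))\to\pi_3(SK(Q,1))$, while $j|_{J(N)}$ is $\pi_3(SK(N,1))\to\pi_3(SK(G,1))$. Its kernel is $J(G)\cap\ker(p\otimes p)=j(J(N))$, giving exactness in the middle; for surjectivity onto $J(Q)$ one lifts $z\in J(Q)$ to $w\in G\otimes G$, observes that $\kappa_G(w)\in N=\kappa_G(j(N\otimes N))$, picks $u\in j(N\otimes N)$ with $\kappa_G(u)=\kappa_G(w)$, and notes $wu^{-1}\in J(G)$ maps to $z$.

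For (2) and (3): the map $p\otimes p$ sends the generators $g\otimes g$ onto the generators of $\nabla(Q)$ and carries $\Delta(G)$ onto $\Delta(Q)$, while $j$ carries $\nabla(N)$ into $\nabla(G)$ and $\Delta(N)$ into $\Delta(G)$; hence the maps of (1) descend to sequences $M(N)\to M(G)\to M(Q)$ and $\widetilde J(N)\to\widetilde J(G)\to\widetilde J(Q)$, right-exact by (1). For exactness in the middle---I would write it out for $M$, the case of $\widetilde J$ being identical with $\Delta$ in place of $\nabla$---if $\overline w\in M(G)$ with $w\in J(G)$ maps to $0$ in $M(Q)$ then $(p\otimes p)(w)\in\nabla(Q)=(p\otimes p)(\nabla(G))$, so $wv^{-1}\in J(G)\cap\ker(p\otimes p)=j(J(N))$ for some $v\in\nabla(G)$, so $\overline w$ lies in the image of $M(N)\to M(G)$; the reverse inclusion is clear. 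Passing through $M\cong H_2$ and $\widetilde J\cong\pi^s_2(K(-,1))$ then yields (2) and (3).

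The one point that is not purely formal is establishing the displayed exact sequence \emph{without} assuming $N$ has a complement in $G$: identifying $\ker(p\otimes p)$ with $j(N\otimes N)$ using only normality and perfectness of $N$, and checking $J(G)\cap j(N\otimes N)=j(J(N))$. Once these are in hand, everything else is diagram chasing.
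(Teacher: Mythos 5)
Your proof is correct, and its engine is the same as the paper's: the right-exact sequence $N\otimes N \to G\otimes G \to (G/N)\otimes(G/N)\to 1$, obtained by identifying $\ker(p\otimes p)$ with the image of $N\otimes N$ via the commutator rewriting $[a,b]\otimes g=(a\otimes b)\,{}^{g}(a\otimes b)^{-1}$ that perfectness of $N$ makes available. Where you diverge is in how the three conclusions are extracted. The paper maps the tensor, exterior and symmetric square sequences (the latter two via the analogues $K_2=i(N\wedge N)$ and $K_5$ from Theorem \ref{T:3.1}) onto the short exact sequence $0\to N\to G'\to G'/N\to 0$ and invokes the snake lemma in the category of groups three times, reading off $J$, $H_2$ and $\tilde J$ as the kernel sequences; you instead perform the kernel chase by hand once, using $J(G)\cap j(N\otimes N)=j(J(N))$ and the lifting argument for surjectivity, and then deduce (2) and (3) from (1) by descending modulo $\nabla$ and $\Delta$, using that $p\otimes p$ carries $\nabla(G)$ onto $\nabla(G/N)$ and $\Delta(G)$ onto $\Delta(G/N)$. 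Your version has one genuine advantage worth keeping: you isolate and verify that the identification $\ker(p\otimes p)=j(N\otimes N)$ requires only $N\trianglelefteq G$ and $N$ perfect, not a complement, whereas the paper's proof cites Proposition \ref{T:3.2}, whose stated hypothesis does include a complement, even though Proposition \ref{P:3.3} assumes none; your argument makes explicit the observation (that the exactness half of the proof of Theorem \ref{T:3.1}(i) never uses the splitting $\alpha$) on which the paper's citation implicitly rests. The trade-off is that the paper's triple use of the snake lemma is shorter once the $\wedge$- and $\widetilde\otimes$-sequences are in hand, while your route needs only the tensor-square sequence and elementary quotient arguments.
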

\begin{proof}

Since $N\trianglelefteq G$ is perfect, a consequence of Proposition \ref{T:3.2} yields the following exact sequence of groups
\begin{equation*}
N\otimes N \longrightarrow G\otimes G \longrightarrow G/N\otimes G/N \longrightarrow 0
\end{equation*}
Note that $(G/N)' = G'/N$ and hence we have an exact sequence of groups $0 \to N \to G' \to G'/N \to 0$.

\begin{itemize}
\item[(1)] We have the following commutative diagram

\begin{equation*}
\xymatrix@+20pt{
&N\otimes N\ \ar@{->}[r]
\ar@{->}[d]
 &G\otimes G\ar@{->}[r]
\ar@{->}[d]
&G/N\otimes G/N\ar@{->}[r]
\ar@{->}[d]
&0 \\
0\ \ar@{->}[r]
&N'\ \ar@{->}[r]
 &G'\ar@{->}[r]
&G'/N \ar@{->}[r]
&0
}\end{equation*}
Note that Snake lemma holds true in the category of groups if the vertical maps are surjective. Thus we have,
\begin{equation*}
 \pi_3(SK(N,1)) \longrightarrow \pi_3(SK(G,1)) \longrightarrow \pi_3(SK(G/N,1))\longrightarrow 0.
\end{equation*}

\item[(2)] From Theorem \ref{T:3.1} we have the following exact sequence,
\begin{equation*}
\xymatrix@+20pt{
1\ar@{->}[r]
&K_2\ar@{->}[r]
&G\wedge G\ar@{->}[r]
&G/N \wedge G/N\ar@{->}[r]
&1
.
}\end{equation*}
As $N$ is perfect, just as in Proposition \ref{T:3.2} we have $K_2 = i(N\wedge N)$ where $i(N\wedge N)$ is the image of $N\wedge N$ in $G\wedge G$ under the natural map $i$ from $ N\wedge N$ to $G\wedge G$. So we have the exact sequence,
\begin{equation*}
\xymatrix@+20pt{
&N\wedge N\ \ar@{->}[r]
 &G\wedge G\ar@{->}[r]
&G/N\wedge G/N\ar@{->}[r]
&0}
\end{equation*}

Then we obtain the following commutative diagram.

\begin{equation*}
\xymatrix@+20pt{
&N\wedge N\ \ar@{->}[r]
\ar@{->}[d]
 &G\wedge G\ar@{->}[r]
\ar@{->}[d]
&G/N\wedge G/N\ar@{->}[r]
\ar@{->}[d]
&0 \\
0\ \ar@{->}[r]
&N'\ \ar@{->}[r]
 &G'\ar@{->}[r]
&G'/N\ar@{->}[r]
&0 .
}\end{equation*}
 Now applying Snake lemma yields
\begin{equation*}
H_2(N) \longrightarrow H_2(G) \longrightarrow H_2(G/N)  \longrightarrow 0.
\end{equation*}

\item[(3)] As in the above case we have the image of $N\widetilde{\otimes} N$  in $G\widetilde{\otimes} G$ under the natural map to be equal to $K_5$ which is as defined in Theorem \ref{T:3.1}. We thus obtain the following exact sequence.
\begin{equation*}
\xymatrix@+20pt{
&N\widetilde{\otimes} N\ \ar@{->}[r]
 &G\widetilde{\otimes} G\ar@{->}[r]
&G/N\widetilde{\otimes} G/N\ar@{->}[r]
&0
}\end{equation*}

And we have,

\begin{equation*}
\xymatrix@+20pt{
&N\widetilde{\otimes} N\ \ar@{->}[r]
\ar@{->}[d]
 &G\widetilde{\otimes} G\ar@{->}[r]
\ar@{->}[d]
&G/N\widetilde{\otimes} G/N\ar@{->}[r]
\ar@{->}[d]
&0 \\
0\ \ar@{->}[r]
&N'\ \ar@{->}[r]
 &G'\ar@{->}[r]
&G'/N \ar@{->}[r]
&0 .
}\end{equation*}
Now applying Snake lemma we obtain,
\begin{equation*}
\pi^s_2(K(N,1))  \longrightarrow \pi^s_2(K(G,1)) \longrightarrow \pi^s_2(K(G/N,1)) \longrightarrow 0
\end{equation*}
\end{itemize}
\end{proof}

\section{Second Stable homotopy group of the Eilenberg-Maclane space}

One of the main aims of this section is to prove that $\pi_2^S(K(G,1))$ is completely determined by the Schur multiplier $H_2(G)$. We also prove that $\pi_3(SK(G,1))\cong H_2(G)\times \nabla (G)$ under some hypothesis (cf. Corollary \ref{C:4.7}, Theorem \ref{T:4.9}, Theorem \ref{T:4.8}). For this we show that the short exact sequence $1\to \nabla(G) \to G\otimes G\to G\wedge G\to 1\ (*)$ splits. In order to do this, we first obtain a splitting of the short exact sequence $1\to \nabla(G_{ab}) \to G_{ab}\otimes G_{ab}\to G_{ab}\wedge G_{ab}\to 1$ and then we prove that $\nabla (G) \cong \nabla(G_{ab})$. Now the left square in the diagram below commutes, giving the splitting for the short exact sequence $(*)$.
\begin{equation*}
\xymatrix@+20pt{
1\ \ar@{->}[r]
&\nabla (G)\ar@{->}[r]
\ar@{}^{||\wr}[d]
 &G\otimes G\ar@{->}[r]
\ar@{->}[d]
&G\wedge G \ar@{->}[r]
\ar@{->}[d]
&1 \\
1\ \ar@{->}[r]
&\nabla (G_{ab})\ar@{->}[r]
 &G_{ab}\otimes G_{ab}\ar@{->}[r]
&G_{ab}\wedge G_{ab}\ar@{->}[r]
&1\ 
}\end{equation*}

In some cases (cf. Theorem \ref{T:4.8}) we obtain a direct splitting for the short exact sequence $(*)$.
We begin with the following easy lemma.

\begin{lemma}\label{L:4.1}
Let $A$ be an abelian group. If $A$ is finitely generated, then the following holds
\begin{itemize}
\item[(i)] $ A\otimes A \cong \nabla (A) \times (A \wedge A)$.
\item[(ii)] Let $N$ be a normal subgroup of $A\otimes A$. If $N\subseteq \nabla (A)$, then $(A\otimes A)/N\cong (\nabla (A))/N\times (A\wedge A)$.
\end{itemize}
\end{lemma}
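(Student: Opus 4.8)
The plan is to reduce everything to the elementary structure theory of finitely generated abelian groups, using that for an abelian group $A$ all the mutual actions in Definition~\ref{D:1.2} are trivial, so the nonabelian tensor square $A\otimes A$ coincides with the ordinary tensor product $A\otimes_{\mathbb Z}A$, the group $A\wedge A=(A\otimes A)/\nabla(A)$ is the usual exterior square, and $\nabla(A)$ is simply the subgroup of the \emph{abelian} group $A\otimes A$ generated by the diagonal elements $a\otimes a$. In particular every subgroup of $A\otimes A$ is normal, so the normality hypothesis in (ii) is automatic; what (i) really asserts is that the short exact sequence $1\to\nabla(A)\to A\otimes A\to A\wedge A\to 1$ splits, and I would prove this by exhibiting an explicit internal complement.

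For (i) I would fix, via the structure theorem, a decomposition $A=\bigoplus_{i=1}^{m}C_i$ into cyclic groups $C_i=\langle e_i\rangle$ of orders $n_i$ (allowing $n_i=\infty$, with the convention $\gcd(\infty,n)=n$). Then $A\otimes A=\bigoplus_{i,j}C_i\otimes C_j$, and expanding $a\otimes a$ for $a=\sum_i a_i$ shows that $\nabla(A)$ contains each diagonal summand $C_i\otimes C_i$ together with all elements $x+\tau(x)$ for $x\in C_i\otimes C_j$, where $\tau\colon C_i\otimes C_j\to C_j\otimes C_i$ is the swap isomorphism; conversely these generate $\nabla(A)$, so $\nabla(A)=\bigoplus_i (C_i\otimes C_i)\oplus\bigoplus_{i<j}D_{ij}$ with $D_{ij}=\{(x,\tau x):x\in C_i\otimes C_j\}$. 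The key point is then that $W:=\bigoplus_{i<j}(C_i\otimes C_j)$, embedded in $A\otimes A$ by placing it in the ``$i<j$'' slots and $0$ elsewhere, is a complement: $W\cap\nabla(A)=0$ because $\tau$ is an isomorphism, and $W+\nabla(A)=A\otimes A$ because within each pair of slots $(u,v)=(u-\tau^{-1}v,0)+(\tau^{-1}v,v)$ with the second summand in $D_{ij}$. Finally the composite $W\hookrightarrow A\otimes A\twoheadrightarrow A\wedge A$ is an isomorphism, since it sends $e_i\otimes e_j\mapsto e_i\wedge e_j$ ($i<j$) and both sides are presented on these generators with the same relations $n_i(e_i\wedge e_j)=n_j(e_i\wedge e_j)=0$. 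Thus $A\otimes A\cong\nabla(A)\times(A\wedge A)$; equivalently one may phrase this as the section $\sigma\colon A\wedge A\to A\otimes A$, $e_i\wedge e_j\mapsto e_i\otimes e_j$ for $i<j$, checking well-definedness by matching relations.

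For (ii) I would simply push the internal decomposition $A\otimes A=\nabla(A)\oplus W$ of part (i) through the quotient by $N$: since $N\subseteq\nabla(A)$, the natural map $A\otimes A\to(\nabla(A)/N)\oplus W$ is surjective with kernel exactly $N$, hence $(A\otimes A)/N\cong(\nabla(A)/N)\times W\cong(\nabla(A)/N)\times(A\wedge A)$.

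All of this is routine; the only place needing care is the precise identification of $\nabla(A)$ and the verification that $W$ is genuinely a complement — in particular treating the free and torsion cyclic factors uniformly, and confirming that the defining relations of the target $A\wedge A$ match those coming from $A\otimes A$. I expect that bookkeeping, rather than any conceptual difficulty, to be the main (and modest) obstacle.
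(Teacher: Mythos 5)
Your argument is correct and is essentially the paper's own proof in a dual formulation: the paper likewise fixes a cyclic decomposition of $A$, describes $\nabla(A)$ by the generators $x_i\otimes x_i$ and $(x_i\otimes x_j)(x_j\otimes x_i)$, and splits $1\to\nabla(A)\to A\otimes A\to A\wedge A\to 1$ explicitly on the summands $C_i\otimes C_j$, using a retraction onto $\nabla(A)$ (identity on diagonal slots, symmetrization for $i<j$, trivial for $i>j$), which is equivalent to your complement $W$ and section $e_i\wedge e_j\mapsto e_i\otimes e_j$. Part (ii) is also handled the same way in the paper, by transporting the splitting from (i) to the quotient by $N\subseteq\nabla(A)$.
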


\begin{proof}
\begin{itemize}
\item[(i)]
Consider the short exact sequence
\begin{equation*}
\xymatrix@+20pt{
1\ \ar@{->}[r]
&\nabla (A)\ar@{->}[r]^{i}
 &A\otimes A\ar@{->}[r]^{p}
&A\wedge A\ar@{->}[r]
&1.
}\end{equation*}
where $i$ is the natural inclusion and $p$ is the natural projection. Let $\{x_i, 1\leq i\leq n\}$ be the generators of the cyclic groups in the  decomposition of $A$ as direct product of cyclic groups. Note that $\nabla (A) $ is the normal subgroup of $A\otimes A$ generated by the elements of the form $\{(x_i \otimes x_i), (x_i \otimes x_j)(x_j \otimes x_i),1\leq i <j\leq n\}$. Since $A\otimes A$ decomposes as a direct sum of cyclic groups $\mathbb{Z}_i\otimes \mathbb{Z}_j$, it suffices to define a map $\alpha : A \otimes A \rightarrow \nabla (A)$ on the generators $x_i\otimes x_j$ given by
\begin{align*}
 &\alpha (x_i \otimes x_i) = (x_i \otimes x_i)\\
 &\alpha (x_i \otimes x_j) = (x_i \otimes x_j)(x_j \otimes x_i)\ \mbox{if }\  i < j \ \mbox{and,}\\
 &\alpha (x_i \otimes x_j) = 1\ \mbox{if} \ i > j
\end{align*}
It is easily verified that the above map is a well defined homomorphism.
Notice that  $\alpha ( i(x_i \otimes x_i)) = \alpha(x_i \otimes x_i)= (x_i \otimes x_i)$, and for $i <j$
\begin{align*}
 \alpha i((x_i \otimes x_j)(x_j \otimes x_i)) &= \alpha((x_i \otimes x_j)(x_j \otimes x_i)) \\ &= \alpha(x_i \otimes x_j)\alpha(x_j \otimes x_i) \\ &= (x_i \otimes x_j)(x_j \otimes x_i)
\end{align*}
 Thus $\alpha i = 1$ and the above short exact sequence splits, giving us the desired result.
 
\item[(ii)]  Consider the short exact sequence
\begin{equation*}
\xymatrix@+20pt{
1\ \ar@{->}[r]
&(\nabla (A))/N\ar@{->}[r]^{j}
 &(A\otimes A)/N\ar@{->}[r]^{p_1}
&A\wedge A\ar@{->}[r]
&1.
}\end{equation*}
where $p_1$ is the natural projection and $j$ is the  inclusion map. Our aim is to prove that this sequence splits. As a consequence of (i), we have a well defined homomorphism
$\beta:A\wedge A \longrightarrow A\otimes A$ such that $p \circ \beta = 1_{A\wedge A}$.
Let $p_2:A\otimes A\to (A\otimes A)/N$ be the natural projection. Since $p_1\circ p_2\circ\beta=p\circ\beta=1_{A\wedge A}$, it follows that $p_2\circ\beta: A\wedge A\to (A\otimes A)/N$ is the required splitting.
\end{itemize}
\end{proof}
Taking $\Delta (A) $ as $N$ in the above Lemma we obtain,

\begin{corollary}\label{C:4.2}
$A\ \widetilde{\otimes}\ A \cong \nabla (A) / \Delta (A) \times (A\wedge A)$.
\end{corollary}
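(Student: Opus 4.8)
The plan is to read off the corollary as the special case $N=\Delta(A)$ of Lemma \ref{L:4.1}(ii). By definition of the symmetric product, $A\ \widetilde{\otimes}\ A=(A\otimes A)/\Delta(A)$, so once we know that $\Delta(A)$ satisfies the two standing hypotheses on $N$ in that lemma --- namely that it is a normal subgroup of $A\otimes A$ and that it is contained in $\nabla(A)$ --- the lemma yields
\[
A\ \widetilde{\otimes}\ A=(A\otimes A)/\Delta(A)\;\cong\;\nabla(A)/\Delta(A)\times(A\wedge A),
\]
which is exactly the assertion.

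Both hypotheses are immediate from the results already in hand. The subgroup $\Delta(A)$ is, by definition, generated by the elements $(x\otimes y)(y\otimes x)$ with $x,y\in A$; each of these lies in $J(A)$, which by Proposition \ref{L:2.2} is central in $A\otimes A$. Hence $\Delta(A)\subseteq J(A)\subseteq Z(A\otimes A)$, so $\Delta(A)$ is in particular a normal (indeed central) subgroup of $A\otimes A$; for the same reason $\nabla(A)$ is abelian, so the quotient $\nabla(A)/\Delta(A)$ appearing in the statement makes sense. The inclusion $\Delta(A)\subseteq\nabla(A)$ is precisely Proposition \ref{L:2.3}.

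In short, this is a purely formal specialization of Lemma \ref{L:4.1}(ii), so there is no genuine obstacle; the one point to keep in view is that the lemma --- and hence the corollary --- is stated under the blanket assumption that $A$ is a finitely generated abelian group, which is what makes the splitting constructed in Lemma \ref{L:4.1}(i) (and used in (ii)) available in the first place.
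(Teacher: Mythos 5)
Your proof is correct and is exactly the paper's argument: the corollary is obtained by taking $N=\Delta(A)$ in Lemma \ref{L:4.1}(ii), with the hypotheses $\Delta(A)\trianglelefteq A\otimes A$ and $\Delta(A)\subseteq\nabla(A)$ guaranteed by the centrality of $J(A)$ and Proposition (iii). Your explicit verification of these hypotheses is a minor elaboration of what the paper leaves implicit.
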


Now we come to one of the main results of this section.

\begin{theorem}\label{T:4.2}
If $G$ is finitely generated group, then $G\ \widetilde{\otimes}\ G\cong (G\wedge G) \times \nabla(G)/{\Delta(G)}$. In particular,  $\pi_2^{S}(K(G,1))\cong H_2(G)\times (\mathbb{Z}/2\mathbb{Z})^{r+k}$, where $r$ is the rank of $G_{ab}$ and $k$ is the number of cyclic groups of even order in the decomposition of $G_{ab}$
\end{theorem}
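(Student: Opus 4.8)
The plan is to split the central extension $1 \to \nabla(G)/\Delta(G) \to G\,\widetilde{\otimes}\, G \to G\wedge G \to 1$ by comparing $G$ with its abelianization $A:=G_{ab}$, where the corresponding splitting is already available (Corollary \ref{C:4.2}). The heart of the argument is to describe $\nabla(G)/\Delta(G)$. I claim that $g\mapsto (g\otimes g)\Delta(G)$ is a homomorphism $G\to\nabla(G)/\Delta(G)$. Using only the defining relations (\ref{E:1.2.1}) and (\ref{E:1.2.2}), and then moving the central factors (Proposition \ref{L:2.1}, since $\nabla(G)\subseteq J(G)$) to the right, one expands
\[
gh\otimes gh \;=\; \bigl({}^{g}h\otimes g\bigr)\bigl(g\otimes {}^{g}h\bigr)\,\bigl({}^{g^{2}}h\otimes {}^{g^{2}}h\bigr)(g\otimes g).
\]
Here $\bigl({}^{g}h\otimes g\bigr)\bigl(g\otimes {}^{g}h\bigr)$ has the form $(x\otimes y)(y\otimes x)$, hence lies in $\Delta(G)$, while ${}^{g^{2}}h\otimes {}^{g^{2}}h = h\otimes h$ because $G$ acts trivially on $\nabla(G)$ (Proposition \ref{L:2.1}); therefore $(gh\otimes gh)\equiv (g\otimes g)(h\otimes h)$ modulo $\Delta(G)$, which proves the claim. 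As the target is abelian the homomorphism factors through $A$, and since $(g\otimes g)(g\otimes g)\in\Delta(G)$ it annihilates $2A$; as $\nabla(G)/\Delta(G)$ is generated by the classes $(g\otimes g)\Delta(G)$, this yields a surjection $\theta_G\colon A/2A \twoheadrightarrow \nabla(G)/\Delta(G)$.

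Running the same computation inside the finitely generated abelian group $A$ gives a surjection $\theta_A\colon A/2A \twoheadrightarrow \nabla(A)/\Delta(A)$. Here I would go further: decomposing $A\otimes A$ into its cyclic summands $\mathbb{Z}_i\otimes\mathbb{Z}_j$ as in the proof of Lemma \ref{L:4.1} one computes $\nabla(A)$ and $\Delta(A)$ explicitly and checks that $\theta_A$ is an isomorphism, so $\nabla(A)/\Delta(A)\cong A/2A\cong(\mathbb{Z}/2\mathbb{Z})^{r+k}$, where $r$ is the rank of $A$ and $k$ the number of cyclic groups of even order in its decomposition. The natural map $G\to A$ induces $\phi\colon \nabla(G)/\Delta(G)\to\nabla(A)/\Delta(A)$ with $\phi\circ\theta_G=\theta_A$; since $\theta_A$ is an isomorphism and $\theta_G$ is onto, $\theta_G$ is also injective, so both $\theta_G$ and $\phi$ are isomorphisms. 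In particular $\nabla(G)/\Delta(G)\cong(\mathbb{Z}/2\mathbb{Z})^{r+k}$.

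With $\phi$ an isomorphism the rest is formal. By Corollary \ref{C:4.2} there is a projection $\pi\colon A\,\widetilde{\otimes}\, A\to\nabla(A)/\Delta(A)$ onto the relevant direct factor; put $\psi := \pi\circ\bigl(G\,\widetilde{\otimes}\, G\to A\,\widetilde{\otimes}\, A\bigr)$, the second map being the natural one. Restricted to $\nabla(G)/\Delta(G)$ the map $\psi$ equals $\phi$, hence is an isomorphism; as $\nabla(G)/\Delta(G)$ is central in $G\,\widetilde{\otimes}\, G$, it follows that $G\,\widetilde{\otimes}\, G=\ker\psi\times\nabla(G)/\Delta(G)$ with $\ker\psi\cong (G\,\widetilde{\otimes}\, G)/(\nabla(G)/\Delta(G))\cong G\wedge G$, giving $G\,\widetilde{\otimes}\, G\cong (G\wedge G)\times\nabla(G)/\Delta(G)$. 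For the second assertion, the map $\bar\kappa\colon G\,\widetilde{\otimes}\, G\to G'$ induced by $\kappa$ has kernel $J(G)/\Delta(G)\cong\pi_2^{S}(K(G,1))$, annihilates $\nabla(G)/\Delta(G)$, and so factors through $G\wedge G$ with kernel $H_2(G)$; intersecting with the direct-product decomposition yields $\pi_2^{S}(K(G,1))\cong H_2(G)\times\nabla(G)/\Delta(G)\cong H_2(G)\times(\mathbb{Z}/2\mathbb{Z})^{r+k}$.

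I expect the main obstacle to be the displayed identity, together with the two supporting observations that its ``stray'' factor lies in $\Delta(G)$ rather than merely in $J(G)$, and that the conjugated diagonal term ${}^{g^{2}}h\otimes {}^{g^{2}}h$ is unchanged in $\nabla(G)$; getting the commutator bookkeeping in (\ref{E:1.2.1})--(\ref{E:1.2.2}) exactly right is the crux. Once that is in place, verifying that $\theta_A$ is an isomorphism in the finitely generated abelian case, and assembling the splitting, are routine.
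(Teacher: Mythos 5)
Your proposal is correct and follows essentially the same route as the paper: both reduce to the splitting over the abelianization from Corollary \ref{C:4.2}, both hinge on the identity $gh\otimes gh\equiv (g\otimes g)(h\otimes h)\pmod{\Delta(G)}$ (plus $(g\otimes g)^2\in\Delta(G)$) to show $\nabla(G)/\Delta(G)\cong\nabla(G_{ab})/\Delta(G_{ab})\cong(\mathbb{Z}/2\mathbb{Z})^{r+k}$, and both then transfer the retraction back to $G\,\widetilde{\otimes}\,G$ using centrality of $\nabla(G)/\Delta(G)$. The only real difference is in the injectivity step, where you compose the surjection $G_{ab}/2G_{ab}\rightarrow\nabla(G)/\Delta(G)$ with the map to $\nabla(G_{ab})/\Delta(G_{ab})$ and use that $\theta_{G_{ab}}$ is an isomorphism, while the paper bounds $|\nabla(G)/\Delta(G)|\le 2^{r+k}$ by an order-counting argument on generators; both rest on the same explicit computation for finitely generated abelian groups.
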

\begin{proof}
Set $\nabla(G)/{\Delta(G)} = \nabla' (G) $ and $\nabla(G_{ab})/{\Delta(G_{ab})} = \nabla' (G_{ab})$.
Let us first prove the theorem under the assumption that the natural projection $\alpha: \nabla' (G) \to \nabla' (G_{ab})$ is an isomorphism. Consider the following commutative diagram
\begin{equation*}
\xymatrix@+20pt{
1\ \ar@{->}[r]
&\nabla'(G)\ar@{->}^{j}[r]
\ar@{}^{||\wr\ \alpha}[d]
 &G\ \widetilde{\otimes}\ G\ar@{->}[r]
\ar@{->}^p[d]
&G\wedge G \ar@{->}[r]
\ar@{->}[d]
&1 \\
1\ \ar@{->}[r]
&\nabla'(G_{ab})\ar@{->}^{i}[r]
 &G_{ab}\ \widetilde{\otimes}\ G_{ab}\ar@{->}[r]
&G_{ab}\ \widetilde{\wedge}\ G_{ab}\ar@{->}[r]
&1.
}\end{equation*}
Our aim is to show that the top row of the above diagram splits. By Corollary \ref{C:4.2}, the bottom row splits and let
  $f: G_{ab}\ \widetilde{\otimes}\ G_{ab} \to \nabla'(G_{ab})$  be the splitting map.  Set $f'=\alpha^{-1} \circ f\circ p$. It is easily checked that $f'$ is the required splitting. Now we proceed to prove that $\alpha$ is an isomorphism. For this, it suffices to show that $|\nabla' (G)|\leq |\nabla' (G_{ab})|$. Suppose that 
$G_{ab}= \mathbb{Z}^r \times \mathbb{Z}/n_1\mathbb{Z} \times \mathbb{Z}/n_2\mathbb{Z} \times \cdots \times \mathbb{Z}/n_m\mathbb{Z}$. Without
loss of generality assume that $n_1, \cdots, n_k$ are even numbers and $n_{k+1}, \cdots, n_m$ are odd numbers. Then
$\nabla' (G_{ab})=(\mathbb{Z}/2 \mathbb{Z})^{r+k}$. Hence it is enough to show that $|\nabla' (G)|\leq 2^{r+k}$. Let $s_1, \cdots, s_r\in G_{ab}$ be
the generators for $\mathbb{Z}^r$ and let $t_i\in G_{ab}$ be the generator of $\mathbb{Z}/n_i\mathbb{Z}$. 
Choose $h_1, \cdots, h_r, g_1, \cdots, g_m\in G$ such that $\tau (h_i)=s_i$ and $\tau (g_i)=t_i$, where $\tau : G\to G_{ab}$ is the
natural projection. Using the following relations
$$
gx\otimes gx = (x\otimes x) (g\otimes g) (^gx\otimes g) (g\otimes \;^gx), \;\; x,g\in G,
$$
$$
y\otimes y =1, \;\; y\in G',
$$
it is easy to see that $\nabla' (G)$ is generated by the images of $h_1\otimes h_1, \cdots, h_r\otimes h_r, g_1\otimes g_1, \cdots, g_m\otimes g_m$. 
Note that $g_i^{n_i}\in G'$ and $(g_i\otimes g_i)^{n_i^2}=g_i^{n_i}\otimes g_i^{n_i}=1_{\otimes}$. This implies that $g_i\otimes g_i$ has odd order for $i>k$.
On the other hand each nontrivial element in $\nabla' (G)$ has order $2$.  Hence $g_i\otimes g_i$ is trivial in $\nabla' (G)$
for $i>k$. Since $\nabla' (G)$ is abelian and each nontrivial element in $\nabla' (G)$ has order $2$, 
we obtain that $|\nabla' (G)|\leq 2^{r+k} \leq |\nabla' (G_{ab})|$.
\end{proof}

In case $G$ is not finitely generated, the next theorem helps us to determine when the Schur multiplier is equal to $\pi_2^S(K(G,1))$. But first we need the following definition.

\begin{definition}
 A group $G$ is said to have $n^{th}$ roots if the map $f : G \longrightarrow G$ defined by $f(g) = g^n$ for all $g \in G$ is surjective.
 \end{definition}

\begin{theorem}\label{T:4.5}
Let $k$ and $n$ be natural numbers. If $(g\otimes g)^k=1$ for all $g\in G$ and for any odd integer $k$ or there exist an even integer $n$ such that $G$ has $n^{th}$ roots, then in both cases $\pi_{2}^S(K(G,1))\cong H_2 (G)$.
\end{theorem}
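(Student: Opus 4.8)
The strategy is to reduce the statement to the single assertion $\nabla(G)=\Delta(G)$. Indeed, $\pi_{2}^{S}(K(G,1))\cong\tilde J(G)=J(G)/\Delta(G)$ and $H_{2}(G)\cong M(G)=J(G)/\nabla(G)$, and by Proposition~2.1(iii) we have $\Delta(G)\subseteq\nabla(G)$, so the canonical surjection $J(G)/\Delta(G)\to J(G)/\nabla(G)$ has kernel $\nabla(G)/\Delta(G)$; hence $\pi_{2}^{S}(K(G,1))\cong H_{2}(G)$ as soon as $\nabla(G)/\Delta(G)$ is trivial. So the whole proof amounts to showing, in each of the two cases, that $\nabla(G)=\Delta(G)$.

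The key preliminary step is to describe $Q:=\nabla(G)/\Delta(G)$. Since $\nabla(G)$ and $\Delta(G)$ both lie in the centre of $G\otimes G$ (Proposition~2.1(ii)), $Q$ is abelian, and by definition it is generated by the classes $\overline{x\otimes x}$, $x\in G$. Expanding $gx\otimes gx$ by means of the defining relations (\ref{E:1.2.1}) and (\ref{E:1.2.2}) gives the identity $gx\otimes gx=(x\otimes x)(g\otimes g)({}^{g}x\otimes g)(g\otimes{}^{g}x)$, and its last two factors are precisely the generator $({}^{g}x\otimes g)(g\otimes{}^{g}x)$ of $\Delta(G)$. Therefore $\overline{gx\otimes gx}=\overline{x\otimes x}\cdot\overline{g\otimes g}$ in $Q$, so the map $\psi\colon G\to Q$, $\psi(g)=\overline{g\otimes g}$, is a surjective homomorphism. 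Next, (\ref{E:1.2.2}) yields $g\otimes g^{2}=(g\otimes g)^{2}$ and then (\ref{E:1.2.1}) yields $g^{2}\otimes g^{2}=(g\otimes g^{2})^{2}=(g\otimes g)^{4}$; comparing this with $\psi(g^{2})=\psi(g)^{2}$ forces $\overline{g\otimes g}^{2}=\overline{g\otimes g}^{4}$, whence $\overline{g\otimes g}^{2}=1$ since $Q$ is abelian. Thus $Q$ is an elementary abelian $2$-group generated by the elements $\overline{g\otimes g}$.

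With this in place the two cases are immediate. If $(g\otimes g)^{k}=1$ for all $g\in G$ and some odd $k$, then $\overline{g\otimes g}^{k}=1$ in $Q$; since $\overline{g\otimes g}$ also has order dividing $2$ and $\gcd(k,2)=1$, we conclude $\overline{g\otimes g}=1$ for all $g$, i.e. $Q=1$. If instead $G$ has $n$-th roots for some even $n$, then for each $g\in G$ choose $h$ with $h^{n}=g$; then $\overline{g\otimes g}=\psi(h)^{n}=\bigl(\overline{h\otimes h}^{2}\bigr)^{n/2}=1$, so again $Q=1$. In either case $\nabla(G)=\Delta(G)$, and hence $\pi_{2}^{S}(K(G,1))=J(G)/\Delta(G)=J(G)/\nabla(G)=H_{2}(G)$, as desired.

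The only point that will require genuine care is the structural claim about $Q$: checking that the stated expansion of $gx\otimes gx$ is correct and that its tail term really is a $\Delta(G)$-generator, that this makes $\psi$ well defined and a homomorphism, and that the identity $g^{2}\otimes g^{2}=(g\otimes g)^{4}$ holds — in short, the bookkeeping with the relations (\ref{E:1.2.1})--(\ref{E:1.2.2}) together with the centrality assertions of Proposition~2.1. Once $Q$ is known to be an elementary abelian $2$-group generated by the squares $\overline{g\otimes g}$, both halves of the theorem follow formally.
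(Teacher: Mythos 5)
Your proposal is correct and takes essentially the same route as the paper: both arguments reduce the theorem to showing $\nabla(G)=\Delta(G)$ (equivalently $Q=\nabla(G)/\Delta(G)=1$) and establish this from the same relations, namely that $(g\otimes g)(g\otimes g)$ is a generator of $\Delta(G)$ and that $x^{2}\otimes x^{2}=(x\otimes x)^{4}$, after which the isomorphism $\pi_{2}^{S}(K(G,1))\cong H_2(G)$ follows from diagram (\ref{D:1}). Your extra packaging via the elementary abelian $2$-group $Q$ and the homomorphism $\psi(g)=\overline{g\otimes g}$ (using the expansion of $gx\otimes gx$, which is the identity appearing in the paper's proof of Theorem \ref{T:4.2}) is sound but amounts to the same computation.
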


\begin{proof}
First we will prove that $\nabla (G) = \Delta (G)$ in both the given cases. By Proposition $\ref{L:2.3}$, $\Delta (G) \subseteq \nabla (G)$. If $k=2n+1$, then $g \otimes g = (g \otimes g)^{-2n}= [(g\otimes g)(g\otimes g)]^{-n}\in \Delta (G)$. Now if $G$ has $n^{th}$ roots for $n=2l$ with $l\in \mathbb{Z}$, then $g = y^{2l}$. Thus $g = x^2$ where $x = y^l$. Then we have,
\begin{align*}
g\otimes g &= x^2\otimes x^2\\
&=(x\otimes x)(x\otimes x)(x\otimes x)(x\otimes x) \hspace{6mm}\mbox{[(\ref{E:1.2.1}),\;(\ref{E:1.2.2})\;and \;Proposition \ref{L:2.1}]}\\
&\subseteq \Delta (G)
\end{align*}
Hence the result follows by commutative diagram \ref{D:1}.
\end{proof}

\begin{corollary}
 If $G$ has odd exponent, then $\pi_{2}^S(K(G,1))\cong H_2 (G)$.
\end{corollary}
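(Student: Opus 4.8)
The plan is to deduce the corollary directly from Theorem \ref{T:4.5}. Let $k$ denote the exponent of $G$, which is assumed odd, so that $g^k=1$ for every $g\in G$. In view of the odd-integer case of Theorem \ref{T:4.5}, it suffices to verify the single hypothesis there, namely that $(g\otimes g)^k=1$ in $G\otimes G$ for all $g\in G$; the conclusion $\pi_2^S(K(G,1))\cong H_2(G)$ then follows immediately.

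To establish this hypothesis I would first record the elementary identity
\begin{equation*}
(g\otimes g)^n = g^n\otimes g \qquad\text{in } G\otimes G,\ \text{ for all } g\in G \text{ and all } n\geq 1 ,
\end{equation*}
proved by induction on $n$. The base case $n=1$ is trivial. For the inductive step, apply relation (\ref{E:1.2.1}) with the first entry written as $g\cdot g^{n}$ and the second entry equal to $g$; since $G$ acts on itself by conjugation we have ${}^{g}(g^{n})=g^{n}$ and ${}^{g}g=g$, so $g^{n+1}\otimes g=(g^{n}\otimes g)(g\otimes g)$, and the induction hypothesis turns this into $g^{n+1}\otimes g=(g\otimes g)^{n+1}$. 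One also checks, again from (\ref{E:1.2.1}) with both first entries taken to be $1$, that $1\otimes g=1$ for every $g\in G$.

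Combining these observations with $g^{k}=1$ gives $(g\otimes g)^{k}=g^{k}\otimes g=1\otimes g=1$ for all $g\in G$. As $k$ is odd, Theorem \ref{T:4.5} now applies and yields $\pi_2^S(K(G,1))\cong H_2(G)$, as claimed. I do not expect any genuine obstacle: the only point needing (minor) care is keeping track of the left-conjugation convention when invoking (\ref{E:1.2.1}), and otherwise this is a routine reduction to the theorem already proved. (Alternatively, one could invoke the centrality of $J(G)$ from Proposition \ref{L:2.2} together with the same power identity, but the direct argument above is the cleanest.)
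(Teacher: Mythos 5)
Your argument is correct and matches the paper's intent: the corollary is stated as an immediate consequence of Theorem \ref{T:4.5}, and you verify its hypothesis by the valid identity $g^{n}\otimes g=(g\otimes g)^{n}$ (from relation (\ref{E:1.2.1}), since conjugation by $g$ fixes powers of $g$) together with $1\otimes g=1$, so $(g\otimes g)^{k}=1$ for the odd exponent $k$. This is essentially the same routine reduction the paper has in mind (which could equally use $(g\otimes g)^{k^{2}}=g^{k}\otimes g^{k}=1$ as in the proof of Theorem \ref{T:4.2}), so no further comment is needed.
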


The idea of the proof of Theorem $\ref{T:4.2}$ can be adapted to prove the following corollary which can also be found in \cite{BFM} as Theorem 1. The proof given below is different and it provides  an explicit splitting.

\begin{corollary}\label{C:4.7}
Let $G$ be a group such that its abelianization is finitely generated. If $G'$ has a complement in $G$, then
\begin{itemize}
\item[(i)] $ G\otimes G \cong \nabla (G) \times (G \wedge G)$. In particular, $\pi_3(SK(G,1))\cong H_2(G)\times \nabla (G)$.
\item[(ii)] Let $N$ be a normal subgroup of $G\otimes G$. If $N\subseteq \nabla (G)$, then $(G\otimes G)/N\cong (\nabla (G))/N\times (G\wedge G)$.
\end{itemize}
\end{corollary}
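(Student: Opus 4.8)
The plan is to mirror the argument of Theorem \ref{T:4.2}, replacing the symmetric square $G\,\widetilde\otimes\,G$ by the full tensor square $G\otimes G$ and the quotient $\nabla'(G)$ by $\nabla(G)$ itself, and to leverage Proposition \ref{P:2.4} in place of the order estimate. For part (i), I would first invoke Lemma \ref{L:4.1}(i) to obtain a splitting $f\colon G_{ab}\otimes G_{ab}\to\nabla(G_{ab})$ of the canonical short exact sequence $1\to\nabla(G_{ab})\to G_{ab}\otimes G_{ab}\to G_{ab}\wedge G_{ab}\to 1$; this uses the hypothesis that $G_{ab}$ is finitely generated. Next I would set up the commutative ladder whose top row is $1\to\nabla(G)\to G\otimes G\to G\wedge G\to 1$ and whose bottom row is the abelianized sequence above, with vertical maps induced by $\tau\colon G\to G_{ab}$ (writing $p\colon G\otimes G\to G_{ab}\otimes G_{ab}$ for the middle arrow). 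Because $G'$ has a complement in $G$, Proposition \ref{P:2.4}(i) gives that the left-hand vertical map $\alpha\colon\nabla(G)\to\nabla(G_{ab})$ is an isomorphism. Then $f'=\alpha^{-1}\circ f\circ p$ is a retraction $G\otimes G\to\nabla(G)$ splitting the top row, so $G\otimes G\cong\nabla(G)\rtimes(G\wedge G)$; since $\nabla(G)\subseteq J(G)$ is central in $G\otimes G$ by Proposition \ref{L:2.2}, the semidirect product is in fact direct, giving $G\otimes G\cong\nabla(G)\times(G\wedge G)$. Restricting $\kappa$ to each factor and recalling $J(G)=\ker\kappa$, $M(G)\cong H_2(G)$, and the diagram (\ref{D:1}), one reads off $\pi_3(SK(G,1))\cong J(G)\cong H_2(G)\times\nabla(G)$.

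For part (ii), given $N\trianglelefteq G\otimes G$ with $N\subseteq\nabla(G)$, I would pass to quotients in the splitting just constructed: the short exact sequence $1\to\nabla(G)/N\to(G\otimes G)/N\to G\wedge G\to 1$ is split by the map induced from $f'$ on the quotient (exactly as in the proof of Lemma \ref{L:4.1}(ii), composing $f'$ with the projection $G\otimes G\to(G\otimes G)/N$ and noting it kills $N$ since $N\subseteq\nabla(G)$ and $f'$ restricts to the identity there). Again centrality of $\nabla(G)/N$ in $(G\otimes G)/N$ upgrades the semidirect product to a direct one, yielding $(G\otimes G)/N\cong(\nabla(G)/N)\times(G\wedge G)$.

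The one point requiring care — and the main thing to get right rather than the main obstacle — is the verification that $\alpha^{-1}\circ f\circ p$ really is a homomorphism splitting $j$, i.e. that $f'\circ j=\mathrm{id}_{\nabla(G)}$: one checks this by chasing a generator $x\otimes x$ of $\nabla(G)$ around the square, using that $p(x\otimes x)=\tau(x)\otimes\tau(x)$ lies in $\nabla(G_{ab})$, that $f$ fixes $\nabla(G_{ab})$, and that $\alpha$ is precisely the restriction of $p$ to $\nabla(G)$. Since Proposition \ref{P:2.4}(i) already supplies the isomorphism $\alpha$ outright, no cardinality argument is needed here, and the proof is shorter than that of Theorem \ref{T:4.2}; the only genuine input beyond formal diagram chasing is the centrality statement from Proposition \ref{L:2.2}, which is what converts the two split extensions into honest direct product decompositions.
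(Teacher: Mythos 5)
Your proposal is correct and follows essentially the same route as the paper: the commutative ladder over the abelianized sequence, the splitting $f$ from Lemma \ref{L:4.1}(i), the isomorphism $\nabla(G)\cong\nabla(G_{ab})$ from Proposition \ref{P:2.4}(i), and the retraction $f'=\alpha^{-1}\circ f\circ p$. The only (immaterial) difference is in (ii), where you quotient the retraction from (i) directly instead of re-running the ladder with Lemma \ref{L:4.1}(ii) as the paper indicates, which amounts to the same computation.
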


\begin{proof}
\begin{itemize}

\item[(i)]
By Proposition $\ref{P:2.4}$, we obtain that $\nabla (G)= \nabla (G_{ab})$. Hence we have the following commutative diagram where $j$ and $i$ are corresponding inclusion maps and $p$ is the natural projection.
\begin{equation*}
\xymatrix@+20pt{
1\ \ar@{->}[r]
&\nabla (G)\ar@{->}^{j}[r]
\ar@{}^{||\wr}[d]
 &G\otimes G\ar@{->}[r]
\ar@{->}^p[d]
&G\wedge G \ar@{->}[r]
\ar@{->}[d]
&1 \\
1\ \ar@{->}[r]
&\nabla (G_{ab})\ar@{->}^{i}[r]
 &G_{ab}\otimes G_{ab}\ar@{->}[r]
&G_{ab}\wedge G_{ab}\ar@{->}[r]
&1.
}\end{equation*}
 Our aim is to show that the top row of the above commutative diagram splits.  By Lemma \ref{L:4.1}$(i)$, the bottom row splits and let
  $f: G_{ab}\otimes G_{ab} \longrightarrow \nabla (G_{ab})$  be the splitting map.  Set $f'=1\circ f\circ p$. It is easily checked that $f'$ is the required splitting. 
 \item[(ii)] Proceeding the same way as in $(i)$ above and using $(ii)$ of Lemma \ref{L:4.1} instead of $(i)$ finishes the proof.
\end{itemize}
\end{proof}

\begin{lemma}\label{L:4.9}
Let $A$ be an abelian torsion group with no elements of order 2, then $A\otimes A \cong \nabla A \times (A\wedge A)$.
\end{lemma}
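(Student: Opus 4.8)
The plan is to reduce the statement to a purely computational fact about tensor products of finite cyclic groups of odd order, and then to piece together the general torsion case from the finite pieces. First I would recall that for an abelian group $A$ one has $A \otimes A \cong \bigoplus_{i,j} \mathbb{Z}_i \otimes \mathbb{Z}_j$, where the $\mathbb{Z}_i$ range over the cyclic summands in a decomposition of $A$; the subtlety is that $A$ need not be finitely generated, so this decomposition is as a (possibly infinite) direct sum of cyclic groups of odd prime-power order (invoking the fact that an abelian torsion group with no $2$-torsion is a direct sum of cyclic $p$-groups with $p$ odd, via its primary decomposition and the structure of such $p$-groups). The key point is that the subgroup $\nabla(A)$ is generated by the diagonal elements $x_i \otimes x_i$ together with the "symmetrized" elements $(x_i \otimes x_j)(x_j \otimes x_i)$ for $i < j$, exactly as in the finitely generated case treated in Lemma \ref{L:4.1}.

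The heart of the argument is to construct a splitting $\alpha : A \otimes A \to \nabla(A)$ of the inclusion $\nabla(A) \hookrightarrow A \otimes A$. I would mimic the formula in Lemma \ref{L:4.1}: fix a well-ordering of the index set of cyclic summands, and on generators set $\alpha(x_i \otimes x_i) = x_i \otimes x_i$, $\alpha(x_i \otimes x_j) = (x_i \otimes x_j)(x_j \otimes x_i)$ for $i < j$, and $\alpha(x_i \otimes x_j) = 1$ for $i > j$. Since $A \otimes A$ splits as a direct sum of the cyclic groups $\mathbb{Z}_i \otimes \mathbb{Z}_j$, to check $\alpha$ is well-defined it suffices to verify on each summand that the assigned value has order dividing the order of that summand; this is where the odd-order hypothesis enters decisively. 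For the diagonal summand $\mathbb{Z}_i \otimes \mathbb{Z}_i$ of odd order $d_i$, one needs $x_i \otimes x_i$ to have order dividing $d_i$ inside $\nabla(A)$, and for the off-diagonal summands one needs $(x_i \otimes x_j)(x_j \otimes x_i)$ to have the right order; both follow from the relations \ref{E:1.2.1}, \ref{E:1.2.2} and Proposition \ref{L:2.1}. Then $\alpha \circ i = \mathrm{id}_{\nabla(A)}$ is a direct check on generators exactly as in Lemma \ref{L:4.1}, and the short exact sequence $1 \to \nabla(A) \to A \otimes A \to A \wedge A \to 1$ splits, giving $A \otimes A \cong \nabla(A) \times (A \wedge A)$.

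I expect the main obstacle to be the passage from finite to infinite: one cannot literally quote Lemma \ref{L:4.1}, which assumes $A$ finitely generated, so I would either (a) observe that $A$ is the filtered colimit of its finitely generated (hence finite, since torsion) subgroups $A_\lambda$, each with no $2$-torsion, note that $\otimes$, $\nabla$ and $\wedge$ all commute with filtered colimits of groups, and deduce the splitting for $A$ by taking a colimit of compatible splittings for the $A_\lambda$ (compatibility requiring a coherent choice of generators and ordering, which is the fiddly bit); or (b) argue directly with the explicit formula for $\alpha$ above, in which case well-definedness on each cyclic summand is automatic and no colimit argument is needed. Option (b) is cleaner and is the one I would write up; the only care needed is to ensure the ordering of the (possibly uncountable) index set is fixed at the outset so that the formula for $\alpha$ is unambiguous. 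Once $A \otimes A \cong \nabla(A) \times (A \wedge A)$ is established, the lemma is proved; its role downstream is to feed into Theorem \ref{T:4.9} via a further reduction (using that the relevant $\nabla$ and $\wedge$ constructions for a torsion $G$ with no $2$-torsion are controlled by $G_{ab}$, which is itself such an abelian group).
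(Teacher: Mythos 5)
There is a genuine gap, and it sits exactly where you placed your main bet. Option (b), the one you say you would write up, rests on the claim that an abelian torsion group with no $2$-torsion is a direct sum of cyclic groups; this is false in general. The quasicyclic (Pr\"ufer) group $\mathbb{Z}(3^{\infty})$ is a torsion abelian group with no elements of order $2$ and is not a direct sum of cyclic groups, and there are also reduced examples such as the torsion subgroup of $\prod_{n}\mathbb{Z}/3^{n}\mathbb{Z}$. Only bounded abelian groups are guaranteed to decompose into cyclic summands (Pr\"ufer's theorem), so the generator-by-generator definition of $\alpha$ modelled on Lemma \ref{L:4.1} simply has no starting point here. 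Option (a) is not a repair as stated: a filtered colimit of split short exact sequences need not split unless the splittings are chosen compatibly with the transition maps, and the splittings produced by Lemma \ref{L:4.1} depend on a choice of cyclic decomposition and ordering for each finitely generated subgroup; these choices are not functorial under inclusions (a cyclic decomposition of a subgroup does not extend to one of a larger group), so the ``fiddly bit'' you flag is precisely the missing argument, not a routine check.

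The paper avoids both problems by writing down a canonical, choice-free splitting. Since every element of $A$ has odd order, squaring is a bijection on $A$ (Bezout gives existence of square roots, absence of $2$-torsion gives uniqueness), and one defines $f: A\otimes A\to \nabla(A)$ by $f(x\otimes y)=(x^{\frac{1}{2}}\otimes y)(y\otimes x^{\frac{1}{2}})$. This visibly lands in $\Delta(A)\subseteq\nabla(A)$, is a well-defined homomorphism, and satisfies $f(a\otimes a)=(x\otimes x^{2})(x^{2}\otimes x)=(x^{2}\otimes x^{2})=a\otimes a$ for $a=x^{2}$, so $f\circ\alpha=\mathrm{id}_{\nabla(A)}$ and the sequence $1\to\nabla(A)\to A\otimes A\to A\wedge A\to 1$ splits. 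Because the formula involves no choices it also behaves well under direct limits, which is what Theorem \ref{T:4.9} later exploits. If you want to keep your colimit framing, you would need to replace your ad hoc splittings by this uniform formula (or some other natural transformation), at which point the colimit argument becomes unnecessary anyway.
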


\begin{proof}
Consider the short exact sequence, $1\to \nabla(A) \to A\otimes A \to A\wedge A \to 1$ where $\alpha$ is the inclusion map from $\nabla (A)$ to $A\otimes A$. Define $f : A\otimes A \to \nabla (A)$ as $f(x\otimes y) = (x^{\frac{1}{2}}\otimes y)(y\otimes x^{\frac{1}{2}})$. We claim that $f$ is a well defined homomorphism and in fact the required splitting map. Towards that end, note that for $x \in A$ there exist an odd integer $k$ such that $x^k = 1$. By Bezout's identity,  we have $x = x^{km+2n} = (x^n)^2$. Therefore $A$ has square roots which are also unique since $A$ has no elements of order $2$. Now it can be easily verified that $f$ is a well defined homomorphism. For $ a = x^2 \in A$ observe that,
 \begin{align*}
 f\circ\alpha(a\otimes a) &=  (x\otimes a)(a\otimes x)\\
 &= (x\otimes x^2)(x^2\otimes x)\\
 &= (x^2\otimes x^2)\\
 &= (a\otimes a)
 \end{align*}
So $f\circ\alpha = 1_{\nabla A}$ and the result follows.
\end{proof}

\begin{theorem}\label{T:4.9}
If $G$ is a torsion group with no elements of order $2$, then $G\otimes G \cong \nabla(G)\times (G\wedge G)$. In particular, $\pi_2^S(K(G,1))\cong H_2(G)$ and $\pi_3(SK(G,1))\cong H_2(G)\times \nabla(G)$. 
\end{theorem}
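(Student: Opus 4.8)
The plan is to reduce the torsion case to the finitely generated abelian case treated in Lemma~\ref{L:4.9}, exactly as Theorem~\ref{T:4.2} reduces to Lemma~\ref{L:4.1}. The key observation is that the short exact sequence $1\to \nabla(G)\to G\otimes G\to G\wedge G\to 1$ sits in a commutative diagram over the corresponding sequence for $G_{ab}$, so it suffices to (a) exhibit a splitting for the $G_{ab}$-sequence and (b) lift it to $G$ by identifying $\nabla(G)$ with $\nabla(G_{ab})$. For step (a) I would invoke Lemma~\ref{L:4.9}: since $G$ is torsion with no elements of order $2$, so is $G_{ab}$, hence $G_{ab}\otimes G_{ab}\cong \nabla(G_{ab})\times (G_{ab}\wedge G_{ab})$, and we get a retraction $f\colon G_{ab}\otimes G_{ab}\to \nabla(G_{ab})$ with $f\circ i=1_{\nabla(G_{ab})}$.

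For step (b), the point is that the square
\begin{equation*}
\xymatrix@+20pt{
1\ \ar@{->}[r]
&\nabla (G)\ar@{->}[r]
\ar@{->}^{\alpha}[d]
 &G\otimes G\ar@{->}[r]
\ar@{->}^{p}[d]
&G\wedge G \ar@{->}[r]
\ar@{->}[d]
&1 \\
1\ \ar@{->}[r]
&\nabla (G_{ab})\ar@{->}[r]
 &G_{ab}\otimes G_{ab}\ar@{->}[r]
&G_{ab}\wedge G_{ab}\ar@{->}[r]
&1
}\end{equation*}
commutes (here $p$ is induced by $G\to G_{ab}$), and once we know $\alpha\colon\nabla(G)\to\nabla(G_{ab})$ is an isomorphism, $\alpha^{-1}\circ f\circ p$ is the desired retraction of $G\otimes G$ onto $\nabla(G)$, which splits $(*)$ and yields $G\otimes G\cong \nabla(G)\times (G\wedge G)$. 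The two ``in particular'' statements then follow: $\pi_3(SK(G,1))\cong J(G)=\nabla(G)\times M(G)\cong \nabla(G)\times H_2(G)$ from the splitting, and $\pi_2^S(K(G,1))\cong H_2(G)$ because $\Delta(G)=\nabla(G)$ in this setting (this last equality is Theorem~\ref{T:4.5}, since a torsion group with no $2$-torsion has odd elementwise order, so $(g\otimes g)^k=1$ for a suitable odd $k$ depending on $g$ — or more directly, each $g$ has a square root, giving $g\otimes g\in\Delta(G)$ as in Lemma~\ref{L:4.9}).

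The main obstacle is proving that $\alpha$ is an isomorphism, i.e.\ $\nabla(G)\cong\nabla(G_{ab})$, without the finite-generation hypothesis that made the counting argument in Theorem~\ref{T:4.2} work. Here I expect to argue more structurally. Surjectivity of $\alpha$ is automatic since $\nabla(G_{ab})$ is generated by images of $\bar x\otimes\bar x$. For injectivity (or rather, for constructing an inverse), I would use that $G$ has unique square roots: for $g\in G$ write $g=h^2$ with $h$ determined by $g$, and note that modulo $\Delta(G)$ one has relations letting us rewrite $g\otimes g$ in terms of tensors of square roots; more usefully, one can define a map $\nabla(G_{ab})\to\nabla(G)$ on generators by $\bar g\otimes\bar g\mapsto$ (class of) $(h\otimes g)(g\otimes h)$ where $h^2=g$, mimicking the formula for $f$ in Lemma~\ref{L:4.9}, and check this is well defined and inverse to $\alpha$. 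Alternatively — and this may be the cleanest route — since $\nabla(G)$ is a torsion abelian group with no $2$-torsion on which squaring is an automorphism, and $\Delta(G)=\nabla(G)$ forces a delicate bookkeeping, I would instead directly build the retraction $G\otimes G\to\nabla(G)$ by the assignment $x\otimes y\mapsto (x^{1/2}\otimes y)(y\otimes x^{1/2})$ using unique square roots in $G$ itself (repeating the proof of Lemma~\ref{L:4.9} verbatim but for the possibly nonabelian $G$), thereby bypassing $G_{ab}$ entirely; the verification that this respects relations \eqref{E:1.2.1}--\eqref{E:1.2.2} is the routine-but-careful computation, easing considerably because $J(G)$ is central and $G$ acts trivially on it by Proposition~\ref{L:2.1}.
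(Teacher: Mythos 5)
Your proposal is essentially correct, and its skeleton coincides with the paper's: split the abelianized sequence via Lemma~\ref{L:4.9} and lift the splitting through the commutative diagram as in Corollary~\ref{C:4.7}, the crux being $\nabla(G)\cong\nabla(G_{ab})$. Where you genuinely diverge is in how that crux is handled. The paper disposes of it in two lines: write $G=\varinjlim G_i$ over finitely generated subgroups, quote Theorem~1.3 of \cite{BFM} to get $\nabla(G_i)\cong\nabla((G_i)_{ab})$, and use that $\nabla$ and abelianization commute with direct limits. Your second (``cleanest'') route instead bypasses $G_{ab}$ altogether and defines the retraction $f(x\otimes y)=(x^{1/2}\otimes y)(y\otimes x^{1/2})$ directly on $G\otimes G$, in the spirit of the paper's own Theorem~\ref{T:4.8}; this buys a self-contained, explicit splitting with no external citation and no limit argument, at the cost of a relation check that the paper never has to do.

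Two cautions. First, your alternative route of defining an inverse $\nabla(G_{ab})\to\nabla(G)$ ``on generators'' is not well posed as stated: $\nabla(G_{ab})$ is a subgroup of $G_{ab}\otimes G_{ab}$, not a group presented by the elements $\bar g\otimes\bar g$, so one cannot simply prescribe images of generators and check a short list of relations. Second, the verification you label routine in the direct construction has one genuine wrinkle: in a nonabelian $G$ one has $(gg')^{1/2}\neq g^{1/2}g'^{1/2}$ in general, so the relation $f(gg'\otimes h)=f({}^gg'\otimes{}^gh)f(g\otimes h)$ is not immediate from centrality and the trivial action alone. It does go through: setting $c(x,y)=(x\otimes y)(y\otimes x)$, one checks $c$ is bimultiplicative and conjugation-invariant (this is where Proposition~2.1 enters), that square roots in $G$ exist and are unique (each element has odd order, so $x=(x^2)^{(k+1)/2}$), that $({}^gg')^{1/2}={}^g(g'^{1/2})$, and --- the key point --- that $(gg')^{1/2}$ and $g^{1/2}g'^{1/2}$ differ by an element of $G'$, which is invisible to $c$ by Lemma~\ref{L:2.5}. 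Also $f(g\otimes g)=c(g^{1/2},g)=(g^{1/2}\otimes g^{1/2})^4=g\otimes g$, so $f$ is indeed the identity on $\nabla(G)$. With these details supplied your argument is complete, and your treatment of the two ``in particular'' statements ($\nabla(G)=\Delta(G)$ via square roots, as in Theorem~\ref{T:4.5}, hence $\pi_2^S(K(G,1))\cong H_2(G)$, and $J(G)\cong\nabla(G)\times H_2(G)$ from the splitting) matches what the paper intends.
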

\begin{proof}
First we will prove that $\nabla (G)\cong \nabla (G_{ab})$. Recall that $G = \varinjlim G_i$, where each $G_i$ is a finitely generated subgroup of $G$. By Theorem 1.3 of \cite{BFM}, we have $\nabla(G_i) \cong \nabla(G_i)_{ab}$. Taking direct limit on both sides and noting that direct limit commutes with both nabla and abelianization, we obtain $\nabla (G)\cong \nabla(\varinjlim (G_i)_{ab})\cong \nabla (G_{ab})$. By Lemma \ref{L:4.9}, the exact sequence $1\to \nabla(G_{ab}) \to G_{ab}\otimes G_{ab} \to G_{ab}\wedge G_{ab} \to 1$ splits. Now proceeding as in Corollary \ref{C:4.7} the result follows.
\end{proof}

\begin{Remark}
Similarly it can be proved that $\nabla'(G) \cong \nabla'(G_{ab})$. If we can obtain the splitting of the short exact sequence $1\to \nabla'(G_{ab}) \to G_{ab}\widetilde{\otimes} G_{ab} \to G_{ab}\widetilde{\wedge} G_{ab}\to 1$, then using the strategy described in the beginning of this section, we can obtain Theorem \ref{T:4.2} without the hypothesis that $G$ is finitely generated.
\end{Remark}

Now we prove the following structural result for $G\otimes G$ by constructing an explicit splitting map.

\begin{theorem}\label{T:4.8} Let $k\in \mathbb{N}$ be an odd integer. If $(g\otimes g)^k=1$ for all $g\in G$, then $G\otimes G \cong \nabla (G) \times G \wedge G $. In particular, if the exponent of $G$ is odd, then $\pi_3(SK(G,1))\cong H_2(G)\times \nabla (G)\cong \pi_2^S(K(G,1))\times \nabla (G)$.
\end{theorem}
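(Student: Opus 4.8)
The plan is to establish a direct product decomposition $G \otimes G \cong \nabla(G) \times (G \wedge G)$ by exhibiting an explicit retraction $f : G \otimes G \to \nabla(G)$ onto the subgroup $\nabla(G)$, i.e. a homomorphism that restricts to the identity on $\nabla(G)$. Since $\nabla(G)$ is central in $G \otimes G$ (it lies inside $J(G)$ by Proposition \ref{L:2.1} and Proposition \ref{L:2.2}), such a retraction immediately splits the central extension $1 \to \nabla(G) \to G \otimes G \to G \wedge G \to 1$, and a central extension admitting a retraction onto the kernel is a direct product. The hypothesis $(g \otimes g)^k = 1$ for a fixed odd $k$ is the natural analogue, for the whole tensor square, of the ``square root'' trick used in Lemma \ref{L:4.9}: writing $k = 2n+1$ we have $g \otimes g = \big((g\otimes g)(g\otimes g)\big)^{-n}$, so every generator $x \otimes x$ of $\nabla(G)$ already lies in $\Delta(G)$, and moreover $g \otimes g$ has a canonical ``halving'' given by this formula.

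The key steps, in order, are as follows. First I would \emph{define} the candidate map on generators by $f(g \otimes h) = \big((g \otimes h)(h \otimes g)\big)^{-n}$, where $k = 2n+1$; note that the image lies in $\Delta(G) \subseteq \nabla(G)$ by Proposition \ref{L:2.3}, and that $f(g \otimes g) = \big((g\otimes g)(g\otimes g)\big)^{-n} = \big((g\otimes g)^2\big)^{-n} = (g\otimes g)^{-2n} = g\otimes g$, using $(g\otimes g)^{2n+1}=1$. So $f$ is forced to be the identity on the generators of $\nabla(G)$, which gives the retraction property once well-definedness is known. Second, and this is the technical heart, I would verify that $f$ respects the defining relations \eqref{E:1.2.1} and \eqref{E:1.2.2} of the tensor square, so that it extends to a genuine group homomorphism $G \otimes G \to \nabla(G)$. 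Here I would use that $G$ acts trivially on $J(G)$ (Proposition \ref{L:2.1}), that $\nabla(G)$ is abelian, and the symmetrization identity $(gg' \otimes h)(h \otimes gg') \equiv$ (a central rearrangement of) $(g\otimes h)(h\otimes g)\cdot({}^gg'\otimes{}^gh)({}^gh\otimes{}^gg')$ together with the fact that conjugation can be dropped once we land in $J(G)$. Third, having the retraction, I would conclude $G \otimes G \cong \nabla(G) \times (G \wedge G)$ from the splitting of the central extension $(*)$.

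For the ``in particular'' clause: if $G$ has odd exponent $e$, then $(g \otimes g)^e = g^e \otimes g^e = 1 \otimes 1 = 1$ using relations \eqref{E:1.2.1}–\eqref{E:1.2.2}, so the hypothesis holds with $k = e$; then $G \otimes G \cong \nabla(G) \times (G\wedge G)$ gives $\pi_3(SK(G,1)) = J(G) \cong \nabla(G) \times M(G) = \nabla(G) \times H_2(G)$, and combining with Theorem \ref{T:4.5} (which gives $\pi_2^S(K(G,1)) \cong H_2(G)$ under the same odd-exponent hypothesis) yields $\pi_3(SK(G,1)) \cong \pi_2^S(K(G,1)) \times \nabla(G)$.

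I expect the main obstacle to be the well-definedness check in the second step: one must confirm that the ``raise the symmetrized tensor to the $-n$ power'' operation is compatible with both bilinearity relations, and the subtlety is that $(g\otimes h)(h\otimes g)$ is not itself central in $G\otimes G$ in general (only its image in, and elements of, $\nabla(G)$ are), so the verification has to be arranged so that every product of such terms that appears lands in the central subgroup $J(G)$ before powers are taken — which is exactly where the identities of Lemma \ref{L:2.5}, Proposition \ref{L:2.1}, and the pattern $(a\otimes b)(b\otimes a)$ seen in the proof of Proposition \ref{P:2.4} do the work. Once inside $J(G)$ everything commutes and the exponent arithmetic $-n$ against $k=2n+1$ closes up cleanly.
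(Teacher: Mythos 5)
Your proposal is essentially the paper's own proof: the paper defines exactly the same retraction $\alpha'(g\otimes h)=\bigl[(g\otimes h)(h\otimes g)\bigr]^{-n}$ (with $k=2n+1$), verifies compatibility with the relations (\ref{E:1.2.1}) and (\ref{E:1.2.2}) using Proposition \ref{L:2.1} just as you outline, checks $\alpha'(g\otimes g)=(g\otimes g)^{-2n}=g\otimes g$, and deduces the splitting and then the odd-exponent statement via Theorem \ref{T:4.5}. The only small slip is in your ``in particular'' step: $g^{e}\otimes g^{e}=(g\otimes g)^{e^{2}}$ rather than $(g\otimes g)^{e}$, but since $e^{2}$ is again odd the hypothesis is satisfied with $k=e^{2}$ and the conclusion is unaffected.
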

\begin{proof}
Consider the  short exact sequence $1\to \nabla(G)  \to G\otimes G\to G \wedge G \to 1$, where $\alpha$ is the natural inclusion from $\nabla (G)$ to $G\otimes G$. Since $k$ is odd, $k = 2n+1$, $n\in \mathbb{N}$. Define  a splitting map $\alpha ' : G \otimes G \to \nabla (G)$ by $\alpha'(g \otimes h) = [(g \otimes h)(h \otimes g)]^{-n}$. We will prove that $\alpha'$ is a well-defined homomorphism. Towards that end,
\begin{align*}
\alpha'(gg_1 \otimes h) &= [(gg_1 \otimes h)(h \otimes gg_1)]^{-n} \\ &= [^g(g_1 \otimes h)(g \otimes h)(h \otimes g) ^g(h \otimes g_1)]^{-n} \\
 &=  [(g \otimes h)(h \otimes g) ^g\{(g_1 \otimes h)(h \otimes g_1)\}]^{-n} \\ &=
 [(g \otimes h)(h \otimes g)(g_1 \otimes h)(h \otimes g_1)]^{-n}\\
\mbox{and               } \\
 \alpha'( ^g g_1 \otimes ^g h) &= [( ^g g_1 \otimes ^g h)( ^g h \otimes ^g g_1)]^{-n}\\ &= [ ^g\{(g_1 \otimes h)(h \otimes g_1)\}]^{-n}\\
  &= [(g_1 \otimes h)(h \otimes g_1)]^{-n}.
 \end{align*}
 Therefore $\alpha'(gg_1 \otimes h) = \alpha'( ^g g_1 \otimes ^g h) \alpha( g \otimes h).$ Similarly $\alpha'(g \otimes hh_1) =
 \alpha'(g \otimes h) \alpha'(^h (g \otimes h_1))$. Hence $\alpha'$ is a homomorphism.
Note that $\alpha' \alpha (g \otimes g) = \alpha '(g \otimes g) = \{(g \otimes g)(g \otimes g)\}^{-n} =
 (g \otimes g)^{-2n} = (g \otimes g)$. Therefore $\alpha'\alpha = 1_{\nabla (G)}$ and the result follows.
 The last claim now follows as a consequence of Theorem \ref{T:4.5}.
\end{proof}

\begin{corollary}\label{C:4.14}
If $G$ is a group of odd order, then $\pi_{2}^S(K(G,1))=H_2 (G)$ and $\pi_{3}(SK(G,1))\cong \nabla (G) \times \pi_{2}^S(K(G,1))=\nabla (G) \times H_2 (G)$ .
 \end{corollary}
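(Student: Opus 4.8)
The plan is to derive this corollary directly from Theorems \ref{T:4.5} and \ref{T:4.8}, the only work being to check that their common hypothesis applies to a group of odd order.

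First I would observe that a finite group $G$ of odd order has odd exponent: by Lagrange's theorem $g^{|G|}=1$ for every $g\in G$, so the exponent $e$ of $G$ divides $|G|$ and is therefore odd. Next I would convert this into the condition ``$(g\otimes g)^k=1$ for some odd $k$'' required by both theorems. Since every $g$ commutes with all of its own powers, a short induction on the defining relations (\ref{E:1.2.1}) and (\ref{E:1.2.2}) of the tensor square gives $g^a\otimes g^b=(g\otimes g)^{ab}$ for all $a,b\geq 1$; taking $a=b=e$ yields $(g\otimes g)^{e^2}=g^e\otimes g^e=1_{\otimes}$, and $e^2$ is odd. Hence the hypotheses of Theorem \ref{T:4.5} and of Theorem \ref{T:4.8} hold with $k=e^2$.

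Finally I would assemble the two conclusions. Theorem \ref{T:4.5} gives $\pi_2^S(K(G,1))\cong H_2(G)$; in fact its proof establishes $\nabla(G)=\Delta(G)$ in this situation, so $\pi_2^S(K(G,1))=\widetilde{J}(G)=M(G)$, which is $H_2(G)$, giving the first assertion. Theorem \ref{T:4.8} gives $G\otimes G\cong\nabla(G)\times(G\wedge G)$, and reading off the kernels of $\kappa$ along the commutative diagram (\ref{D:1}) yields $J(G)\cong\nabla(G)\times M(G)$, that is, $\pi_3(SK(G,1))\cong\nabla(G)\times H_2(G)\cong\nabla(G)\times\pi_2^S(K(G,1))$, which is the second assertion. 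Since both inputs are already proved, there is essentially no obstacle here; the single point that needs a line of verification is the identity $g^e\otimes g^e=(g\otimes g)^{e^2}$, and even that is already implicit in the ``odd exponent'' clause of Theorem \ref{T:4.8}.
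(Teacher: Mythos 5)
Your proposal is correct and follows essentially the same route as the paper, which states this corollary without separate proof as an immediate consequence of the odd-exponent clauses of Theorems \ref{T:4.5} and \ref{T:4.8}. Your only addition is to make explicit the step $(g\otimes g)^{e^2}=g^{e}\otimes g^{e}=1$ (an identity the paper itself uses in the proof of Theorem \ref{T:4.2}), which is a correct and harmless elaboration.
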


\begin{corollary}\label{C:4.15}
If $\Delta(G)$ has unique $n^{th}$ roots where $n$ is even then, $\pi_{2}^S(K(G,1))=H_2 (G)$ and $\pi_{3}(SK(G,1))\cong \nabla (G) \times \pi_{2}^S(K(G,1))=\nabla (G) \times H_2 (G)$ .
\end{corollary}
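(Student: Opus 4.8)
The plan is to derive both conclusions from a single algebraic fact: under the hypothesis one has $\nabla(G)=\Delta(G)$, and moreover the short exact sequence $1\to\nabla(G)\to G\otimes G\to G\wedge G\to 1$ splits. Granting these, the two left-hand vertical maps in the commutative diagram \ref{D:1} become isomorphisms, so
\[
\pi_2^S(K(G,1))=\widetilde J(G)=J(G)/\Delta(G)=J(G)/\nabla(G)=M(G)=H_2(G);
\]
and since $\nabla(G)\subseteq J(G)$ lies in the centre of $G\otimes G$ (Proposition \ref{L:2.1}, \ref{L:2.2}), a chosen splitting is automatically a direct product $G\otimes G\cong\nabla(G)\times(G\wedge G)$, and restricting the commutator map $G\otimes G\to G'$ to kernels gives $\pi_3(SK(G,1))=J(G)\cong\nabla(G)\times M(G)=\nabla(G)\times H_2(G)=\nabla(G)\times\pi_2^S(K(G,1))$. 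So everything reduces to the two displayed points.

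First I would record the elementary consequences of the hypothesis. Writing $n=2m$, bijectivity of $d\mapsto d^n$ on $\Delta(G)$ shows $\Delta(G)$ is $n$-divisible with no $n$-torsion; as $n$ is even, $\Delta(G)$ is then $2$-divisible and $2$-torsion-free, so every element of $\Delta(G)$ has a unique square root there. Next, $(x\otimes x)^2=(x\otimes x)(x\otimes x)\in\Delta(G)$ for each $x\in G$, and since $\nabla(G)$ is abelian (it is central in $G\otimes G$) this forces $w^2\in\Delta(G)$ for every $w\in\nabla(G)$, i.e. $\nabla(G)^2\subseteq\Delta(G)$. Transporting the $2$-divisibility across this containment — each generator $x\otimes x$ of $\nabla(G)$, hence every element, is a square of an element of $\nabla(G)$, and such squares already lie in $\Delta(G)$ — yields $\nabla(G)\subseteq\Delta(G)$; combined with Proposition \ref{L:2.3} this gives $\nabla(G)=\Delta(G)$, which is therefore $2$-torsion-free and $2$-divisible.

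It then remains to split $1\to\nabla(G)\xrightarrow{\iota}G\otimes G\to G\wedge G\to 1$. For $g,h\in G$ the element $(g\otimes h)(h\otimes g)$ lies in $\Delta(G)=\nabla(G)$ and so has a unique square root there; I would define $\alpha'\colon G\otimes G\to\nabla(G)$ on generators by $\alpha'(g\otimes h)=\big((g\otimes h)(h\otimes g)\big)^{1/2}$ and verify it is a well-defined homomorphism. Compatibility with \ref{E:1.2.1} and \ref{E:1.2.2} follows by exactly the computation used in Theorem \ref{T:4.8}: one rewrites $(gg_1\otimes h)(h\otimes gg_1)$ as $(g_1\otimes h)(h\otimes g_1)\cdot(g\otimes h)(h\otimes g)$ using centrality of the $\Delta(G)$-elements and the trivial action of $G$ on $\Delta(G)$ (Proposition \ref{L:2.1}), and then uses that square-root extraction in the $2$-torsion-free abelian group $\nabla(G)$ is multiplicative; similarly for the second variable, as in Lemma \ref{L:4.9}. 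Finally $\alpha'(\iota(x\otimes x))=\big((x\otimes x)^2\big)^{1/2}=x\otimes x$ by uniqueness of square roots, so $\alpha'\iota=\mathrm{id}_{\nabla(G)}$ and the sequence splits.

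The main obstacle is the middle step, passing from "$\Delta(G)$ has unique $n$-th roots" to "$\nabla(G)=\Delta(G)$": the inclusion $\nabla(G)^2\subseteq\Delta(G)$ by itself only says $\nabla(G)/\Delta(G)$ is an elementary abelian $2$-group, so the transfer of $2$-divisibility has to be done carefully at the level of the generators $x\otimes x$, and one must be sure the hypothesis genuinely controls $\nabla(G)$ and not only $\Delta(G)$. Everything downstream of this — the bijectivity reductions, the diagram chase through \ref{D:1}, and the construction and verification of $\alpha'$ — is a routine adaptation of the arguments already given for Theorem \ref{T:4.8}, Lemma \ref{L:4.9} and Corollary \ref{C:4.7}.
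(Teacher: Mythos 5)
Your outer scaffolding is fine: reducing unique even roots to unique square roots in the abelian group $\Delta(G)$ is correct, and \emph{granted} $\nabla(G)=\Delta(G)$, the rest of your plan (the retraction $g\otimes h\mapsto[(g\otimes h)(h\otimes g)]^{1/2}$ verified by the computation of Theorem \ref{T:4.8} plus multiplicativity of square roots, centrality giving the direct product, and the chase through the three-row diagram of the introduction) goes through. But the middle step is a genuine gap, and it sits exactly where you flagged it: nothing transports the unique $2$-divisibility of $\Delta(G)$ to $\nabla(G)$. The sentence ``each generator $x\otimes x$ of $\nabla(G)$, hence every element, is a square of an element of $\nabla(G)$'' is asserted, not proved, and it is false in general. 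Concretely, take $G=\mathbb{Z}/6\mathbb{Z}$ and $n=4$: then $G\otimes G\cong\mathbb{Z}/6\mathbb{Z}$, $\Delta(G)\cong\mathbb{Z}/3\mathbb{Z}$, and the fourth-power map on $\Delta(G)$ is the identity, so $\Delta(G)$ has unique $4$th roots; yet $\nabla(G)=G\otimes G\neq\Delta(G)$, the generator $x\otimes x$ is not a square in $\nabla(G)$, and $\pi_2^S(K(G,1))\cong\mathbb{Z}/2\mathbb{Z}$ while $H_2(G)=0$, in agreement with Theorem \ref{T:4.2} ($r=0$, $k=1$). The same happens for any elementary abelian $2$-group, where $\Delta(G)$ is trivial. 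So with the hypothesis read as you read it (roots taken and unique \emph{inside} $\Delta(G)$), the intermediate claim $\nabla(G)=\Delta(G)$ cannot be rescued, and indeed the first conclusion itself fails for such $G$; note that $\pi_2^S=H_2$, as quotients of $J(G)$ by $\Delta(G)$ and $\nabla(G)$, is tantamount to $\nabla(G)=\Delta(G)$.

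The gap does close if the uniqueness of $n$-th roots is interpreted in the ambient group: if every element of $\Delta(G)$ has an $n$-th root in $\Delta(G)$ which is its \emph{only} $n$-th root in $G\otimes G$ (or in $\nabla(G)$), then for each $x\in G$ the element $(x\otimes x)^n\in\Delta(G)$ has a root $d\in\Delta(G)$, and uniqueness forces $x\otimes x=d\in\Delta(G)$; hence $\nabla(G)=\Delta(G)$ is uniquely $2$-divisible and your square-root retraction splits $1\to\nabla(G)\to G\otimes G\to G\wedge G\to 1$ exactly as in Theorem \ref{T:4.8} and Lemma \ref{L:4.9}, giving both conclusions. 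Alternatively, the hypothesis of Theorem \ref{T:4.5} (that $G$ itself has $n$-th roots for some even $n$) also forces every $g\otimes g$ into $\Delta(G)$ and makes your argument run. The paper prints no proof of this corollary, so there is no argument to compare yours against line by line; but as submitted, your proposal rests on an unjustified step which, under the literal hypothesis on $\Delta(G)$ alone, is not merely unproved but false.
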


\begin{corollary}\label{C:4.16}
Let $G$ be a perfect group, then $\pi_3(SK(G,1))=\pi_{2}^S(K(G,1))=H_2 (G)$.
\end{corollary}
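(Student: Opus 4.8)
The plan is to reduce the claim to the already-established machinery for perfect groups, principally Proposition \ref{T:3.2} and its consequence Proposition \ref{P:3.3}, together with the observation that a perfect group has trivial abelianization. The statement to prove is that for a perfect group $G$ one has $\pi_3(SK(G,1)) = \pi_2^S(K(G,1)) = H_2(G)$, i.e.\ in the universal commutative diagram $(\ref{D:1})$ all three rows have the \emph{same} kernel term. Recall that $\pi_3(SK(G,1)) \cong J(G)$, that $\pi_2^S(K(G,1)) \cong \tilde J(G) = J(G)/\Delta(G)$, and that $H_2(G) \cong M(G) = J(G)/\nabla(G)$. Since $\Delta(G) \subseteq \nabla(G)$ by Proposition \ref{L:2.3}, it suffices to prove $\nabla(G) = 1$ when $G$ is perfect; then all three quotients of $J(G)$ coincide with $J(G)$ itself, and the corollary follows immediately.

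So the heart of the argument is: \emph{if $G$ is perfect, then $\nabla(G) = 1$}, equivalently $x \otimes x = 1$ for every $x \in G$. But $G = G'$ since $G$ is perfect, so every $x \in G$ lies in $G'$, and Proposition \ref{L:2.1}(iv) states exactly that $x \otimes x = 1$ whenever $x \in G'$. Hence $\nabla(G)$, which is generated by the elements $x \otimes x$, is trivial. This is the whole content; there is essentially no obstacle, since Proposition \ref{L:2.1}(iv) does all the work. (Alternatively, one could invoke $\Delta(G) \subseteq \nabla(G)$ and note that $\nabla(G) \cong \nabla(G_{ab}) = \nabla(1) = 1$ via Proposition \ref{P:2.4}, as $G' = G$ trivially has a complement — the trivial subgroup — in $G$; but the direct argument via Proposition \ref{L:2.1}(iv) is cleaner and needs no complement hypothesis.)

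With $\nabla(G) = 1$ in hand, I would finish as follows. From $\Delta(G) \subseteq \nabla(G) = 1$ we get $\Delta(G) = 1$ as well. Therefore $\pi_2^S(K(G,1)) \cong J(G)/\Delta(G) = J(G)$ and $H_2(G) \cong J(G)/\nabla(G) = J(G)$, while $\pi_3(SK(G,1)) \cong J(G)$ by definition. Chaining these isomorphisms gives $\pi_3(SK(G,1)) \cong \pi_2^S(K(G,1)) \cong H_2(G)$, which is the assertion. One could equally derive it from Proposition \ref{P:3.3} applied with $N = G$ (using $G/G = 1$, so the outgoing terms vanish and the maps $H_2(G) \to H_2(G)$ etc.\ are forced to be isomorphisms by the surjectivity built into those sequences), but the $\nabla(G) = 1$ route is the most economical. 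The only thing to be careful about is citing Proposition \ref{L:2.1}(iv) correctly and recalling the identifications of $\pi_3(SK(G,1))$, $\pi_2^S(K(G,1))$, $H_2(G)$ with $J(G)$, $\tilde J(G)$, $M(G)$ respectively, all of which are recorded in the introduction.
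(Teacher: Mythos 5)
Your proposal is correct, and it rests on exactly the observation the paper relies on: for perfect $G$ every element lies in $G'$, so $x\otimes x=1$ for all $x\in G$, hence $\nabla(G)=\Delta(G)=1$ and all three kernels in diagram (\ref{D:1}) coincide with $J(G)$. The paper prints no separate proof, placing the statement as an immediate consequence of Theorems \ref{T:4.5} and \ref{T:4.8} with $k=1$; your direct route through $\nabla(G)=1$ is the same idea applied straight to the diagram, and is if anything more economical.
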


\section{Applications}

In this section we compute  $\pi_3(SK(G,1))$ and $\pi_2^S(K(G,1))$ for symmetric groups, alternating groups and general linear groups.
\begin{corollary}
Let $A_n$ denote the alternating group on $n$ letters, then
\begin{align*}
\pi_3({SK(A_n,1})&=\pi_{2}^S({K(A_n,1)})\cong \mathbb{Z}_2 \hspace{5mm} \mbox{if}\ n\geq 5,\ n\neq 6,7\\
\pi_3({SK(A_n,1)})&=\pi_{2}^S({K(A_n,1)})\cong \mathbb{Z}_6 \hspace{5mm} \mbox{if}\ n = 6,7
\end{align*}
\end{corollary}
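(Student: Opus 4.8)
The plan is to apply Corollary \ref{C:4.16} to the alternating groups, since $A_n$ is perfect for $n \geq 5$. By that corollary, for any perfect group $G$ we have $\pi_3(SK(G,1)) = \pi_2^S(K(G,1)) = H_2(G)$, so the entire computation reduces to recalling the Schur multipliers of the alternating groups, which are classical. First I would invoke the well-known result of Schur that $H_2(A_n) \cong \mathbb{Z}/2\mathbb{Z}$ for $n \geq 4$ with the two exceptional cases $H_2(A_6) \cong H_2(A_7) \cong \mathbb{Z}/6\mathbb{Z}$; combined with perfectness of $A_n$ for $n\geq 5$, this immediately yields the stated isomorphisms.

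More precisely, the key steps in order are: (1) observe that $A_n$ is a perfect group for all $n \geq 5$ (indeed $A_n$ is simple and nonabelian in this range); (2) apply Corollary \ref{C:4.16} to conclude $\pi_3(SK(A_n,1)) = \pi_2^S(K(A_n,1)) = H_2(A_n)$; (3) cite the classical computation of $H_2(A_n)$ — namely $H_2(A_n) \cong \mathbb{Z}/2\mathbb{Z}$ for $n \geq 5$ except $n = 6, 7$, and $H_2(A_6) \cong H_2(A_7) \cong \mathbb{Z}/6\mathbb{Z}$ — to read off the two cases in the statement. No nonabelian tensor square manipulation is needed beyond what Corollary \ref{C:4.16} already packages.

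The only real content here is the Schur multiplier input, which is not proved in this paper but is standard (it appears in Schur's original work on projective representations of the symmetric and alternating groups, and in standard references on the Schur multiplier); I would simply cite it. So there is no genuine obstacle: the corollary does all the homotopy-theoretic work, and the arithmetic of $H_2(A_n)$ is a lookup. If one wanted to be self-contained one could note that the exceptional $\mathbb{Z}/6\mathbb{Z}$ for $n=6,7$ reflects the exceptional $2$-fold and $3$-fold covers of $A_6$ and $A_7$, but for the purposes of this corollary a citation suffices.

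One small caveat worth flagging: the statement as written restricts to $n \geq 5$, which is exactly the range where $A_n$ is perfect, so Corollary \ref{C:4.16} applies on the nose; for $n = 4$ the group $A_4$ is not perfect and would need the separate machinery of Section 4 (or a direct computation), but that case is excluded from the statement. Thus the proof is complete once perfectness and the Schur multiplier values are in hand.
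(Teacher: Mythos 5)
Your proposal is correct and matches the paper's proof, which likewise deduces the result directly from Corollary \ref{C:4.16} using the perfectness of $A_n$ for $n\geq 5$ together with the classical values of $H_2(A_n)$. Nothing further is needed.
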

\begin{proof}  This follows from Corollary $\ref{C:4.16}$.
\end{proof}
\begin{prop}
Let $S_n$ denote the symmetric group on $n$ letters, then
\begin{itemize}
\item[(i)]\begin{align*}
\pi_3({SK(S_n,1)})&\cong \mathbb{Z}_2\times \mathbb{Z}_2 \hspace{6mm} \mbox{when}\ n\geq 4\\
\pi_3({SK(S_n,1)})&\cong \mathbb{Z}_2 \hspace{15mm} \mbox{otherwise}\\
\end{align*}
\item[(ii)]\begin{align*}
\pi^S_2({SK(S_n,1)})&\cong \mathbb{Z}_2\times \mathbb{Z}_2 \hspace{6mm} \mbox{when}\ n\geq 4\\
\pi^S_2({SK(S_n,1)})&\cong \mathbb{Z}_2 \hspace{15mm} \mbox{otherwise}\\
\end{align*}
\end{itemize}
\end{prop}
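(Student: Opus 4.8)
The plan is to deduce both parts from the general splitting results of Section~4, together with the classical value of the Schur multiplier of the symmetric group. Throughout I assume $n\ge 2$ (for $n\le 1$ the group $S_n$ is trivial and the assertion is vacuous, so ``otherwise'' is read as $2\le n\le 3$).

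First I would check that the hypotheses of Corollary~\ref{C:4.7} and Theorem~\ref{T:4.2} hold for $G=S_n$. For $n\ge 2$ one has $S_n'=A_n$, and the transposition subgroup $\langle(1\,2)\rangle\cong\mathbb{Z}/2\mathbb{Z}$ is a complement of $A_n$ in $S_n$ (it has order $2$ and meets $A_n$ trivially, while $[S_n:A_n]=2$); moreover $(S_n)_{ab}\cong\mathbb{Z}/2\mathbb{Z}$ is finitely generated, and $S_n$ itself is finite, hence finitely generated. Note that I do \emph{not} need $A_n$ to be perfect, so the argument is uniform in $n\ge 2$ (in particular it covers $n=4$). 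Consequently Corollary~\ref{C:4.7}(i) applies and gives $\pi_3(SK(S_n,1))\cong H_2(S_n)\times\nabla(S_n)$, while Theorem~\ref{T:4.2} applies and gives $\pi_2^{S}(K(S_n,1))\cong H_2(S_n)\times(\mathbb{Z}/2\mathbb{Z})^{r+k}$.

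Next I would evaluate the correction factors. Since $S_n'$ has a complement in $S_n$, Proposition~\ref{P:2.4} yields $\nabla(S_n)\cong\nabla\big((S_n)_{ab}\big)=\nabla(\mathbb{Z}/2\mathbb{Z})$, and a direct computation of the nonabelian tensor square of $\mathbb{Z}/2\mathbb{Z}$ gives $\nabla(\mathbb{Z}/2\mathbb{Z})\cong\mathbb{Z}/2\mathbb{Z}$; thus $\pi_3(SK(S_n,1))\cong H_2(S_n)\times\mathbb{Z}/2\mathbb{Z}$. For the stable group, $(S_n)_{ab}\cong\mathbb{Z}/2\mathbb{Z}$ has free rank $r=0$ and exactly $k=1$ cyclic summand of even order, so $(\mathbb{Z}/2\mathbb{Z})^{r+k}=\mathbb{Z}/2\mathbb{Z}$ and $\pi_2^{S}(K(S_n,1))\cong H_2(S_n)\times\mathbb{Z}/2\mathbb{Z}$ as well.

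Finally I would substitute the classical fact that $H_2(S_n)\cong\mathbb{Z}/2\mathbb{Z}$ for $n\ge 4$ and $H_2(S_n)=0$ for $n\le 3$. This gives $\pi_3(SK(S_n,1))\cong\pi_2^{S}(K(S_n,1))\cong\mathbb{Z}/2\mathbb{Z}\times\mathbb{Z}/2\mathbb{Z}$ for $n\ge 4$ and $\cong\mathbb{Z}/2\mathbb{Z}$ for $2\le n\le 3$, which is the claim. I do not expect a serious obstacle here: the whole argument is an assembly of results already established in the paper, and the only genuinely external input is the standard value of $H_2(S_n)$; the one small computation to carry out carefully by hand is the tensor square of $\mathbb{Z}/2\mathbb{Z}$, which is what pins down $\nabla(S_n)$.
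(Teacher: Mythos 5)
Your proposal is correct and follows essentially the same route as the paper: part (i) is the paper's argument (the split sequence $1\to A_n\to S_n\to\mathbb{Z}_2\to 1$, Corollary \ref{C:4.7}, $\nabla(S_n)\cong\nabla(\mathbb{Z}_2)\cong\mathbb{Z}_2$ via Proposition \ref{P:2.4}, and the classical $H_2(S_n)$). For part (ii) the paper instead writes $\pi_2^S(K(S_n,1))=\pi_3(SK(S_n,1))/\Delta(S_n)$ and uses $\Delta(S_n)\cong\Delta(\mathbb{Z}_2)=0$ from Proposition \ref{P:2.4}, whereas you invoke Theorem \ref{T:4.2} with $r=0$, $k=1$; both are immediate consequences of the same Section 4 machinery and give the identical answer, so the difference is purely cosmetic.
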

\begin{proof}
\begin{itemize}
\item[(i)]We have the following short exact sequence $1\to A_n\to S_n\to \mathbb{Z}_2\to 1$ which splits. Recall that $H_2(S_n)\cong \mathbb{Z}_2$ when $n\geq 4$ and $0$ otherwise. Applying Corollary \ref{C:4.7} and noting that $\nabla(\mathbb{Z}_2)\cong \mathbb{Z}_2$, it follows that $\pi_3({SK(S_n,1)})\cong \mathbb{Z}_2\times \mathbb{Z}_2$ when $n\geq 4$ and $\pi_3({SK(S_n,1)})\cong \mathbb{Z}_2$ otherwise.
\item[(ii)] Note that $\pi_{2}^S({K(S_n,1)})=\pi_3({SK(S_n,1)})/{\Delta(G)}$. By Proposition \ref{P:2.4}, we have that  $\Delta(G)=\Delta(G_{ab})$. Noting that $\Delta(\mathbb{Z}_2)=0$, it follows that $\pi_{2}^S({K(S_n,1)})\cong \mathbb{Z}_2\times \mathbb{Z}_2$ when $n\geq 4$ and is isomorphic to $\mathbb{Z}_2$ otherwise.
\end{itemize}
\end{proof}

\begin{prop}
Let $\mathbb{F}_p$ denote the finite field of characteristic $p$ then,\newline
\begin{itemize}

\item[(i)]
For $p > 3$,
\begin{align*}
&\pi_3({SK(GL(n, \mathbb{F}_p),1}))\cong\mathbb{Z}_{p-1}
\end{align*}
For $p = 3$,
\begin{align*}
&\pi_3({SK(GL(n, \mathbb{F}_p),1}))\cong\mathbb{Z}_{2}\hspace{8mm}\mbox{if}\ n \neq 2\\
\end{align*}
For $p = 2$,
\begin{align*}
&\pi_3({SK(GL(n, \mathbb{F}_p),1}))\cong\mathbb{Z}_2 \hspace{8mm}\mbox{if}\ n \in \{3,4\} \\
&\pi_3({SK(GL(n, \mathbb{F}_p),1}))\ \mbox{is\ trivial\ for}\ n > 4\\
\end{align*}
\item[(ii)]
For $p > 3$,
\begin{align*}
&\pi^S_2({K(GL(n, \mathbb{F}_p),1}))\cong\mathbb{Z}_{2}
\end{align*}
For $p = 3$,
\begin{align*}
&\pi^S_2({K(GL(n, \mathbb{F}_p),1}))\cong\mathbb{Z}_{2}\hspace{8mm}\mbox{if}\ n \neq 2\\
\end{align*}
For $p = 2$,
\begin{align*}
&\pi^S_2({K(GL(n, \mathbb{F}_p),1}))\cong \mathbb{Z}_{2} \hspace{8mm}\mbox{if}\ n \in \{3,4\} \\
&\pi^S_2({K(GL(n, \mathbb{F}_p),1}))\ \mbox{is\ trivial\ for}\ n > 4\\
\end{align*}

\end{itemize}

\end{prop}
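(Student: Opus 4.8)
The plan is to exploit the semidirect product decomposition $GL(n,\mathbb{F}_p)\cong SL(n,\mathbb{F}_p)\rtimes\mathbb{Z}_{p-1}$: a complement to $SL(n,\mathbb{F}_p)$ is the image of the embedding $\mathbb{F}_p^{\times}\hookrightarrow GL(n,\mathbb{F}_p)$, $\lambda\mapsto\operatorname{diag}(\lambda,1,\dots,1)$, whose composite with $\det$ is the identity of $\mathbb{F}_p^{\times}\cong\mathbb{Z}_{p-1}$. Write $N=SL(n,\mathbb{F}_p)$ and $G=GL(n,\mathbb{F}_p)$. In each case appearing in the statement, $N$ is perfect and $G'=N$; perfectness of $SL(n,\mathbb{F}_p)$ fails only at $(n,p)\in\{(2,2),(2,3)\}$, which explains why the case $n=2$, $p=3$ is omitted and why for $p=2$ only $n\geq3$ is considered. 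Hence $N$ is a perfect normal subgroup of $G$ admitting a complement, so the results of Sections 3 and 4 requiring these hypotheses are all available.

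For $p=2$ one has $G=SL(n,\mathbb{F}_2)$, which is perfect, so Corollary \ref{C:4.16} yields $\pi_3(SK(G,1))=\pi_2^S(K(G,1))=H_2(SL(n,\mathbb{F}_2))$; it then remains to quote the Schur multipliers $H_2(SL(3,\mathbb{F}_2))\cong H_2(PSL(2,7))\cong\mathbb{Z}_2$, $H_2(SL(4,\mathbb{F}_2))\cong H_2(A_8)\cong\mathbb{Z}_2$, and $H_2(SL(n,\mathbb{F}_2))=0$ for $n\geq5$. For $p\geq3$, Corollary \ref{C:4.7}(i) applies to $G$ (its abelianization $\mathbb{Z}_{p-1}$ is finite and $G'=N$ has a complement), giving $\pi_3(SK(G,1))\cong H_2(G)\times\nabla(G)$; by Proposition \ref{P:2.4}(i), $\nabla(G)\cong\nabla(G_{ab})=\nabla(\mathbb{Z}_{p-1})\cong\mathbb{Z}_{p-1}$, using that the nabla of a finite cyclic group $\mathbb{Z}_m$ is again $\mathbb{Z}_m$ (immediate from the computation of $\mathbb{Z}_m\otimes\mathbb{Z}_m$ in \cite{BJR}). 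That $H_2(G)=0$ follows from Proposition \ref{P:3.3}(2): exactness of $H_2(N)\to H_2(G)\to H_2(\mathbb{Z}_{p-1})\to0$ together with $H_2(\mathbb{Z}_{p-1})=0$ and $H_2(N)=0$ forces $H_2(G)=0$, where the vanishing $H_2(SL(n,\mathbb{F}_p))=0$ for $n\geq3$ (any prime $p$) and for $n=2$, $p\geq5$, is the assertion that $SL(n,\mathbb{F}_p)\to PSL(n,\mathbb{F}_p)$ is a universal central extension, equivalently $K_2(\mathbb{F}_p)=0$. Thus $\pi_3(SK(G,1))\cong\mathbb{Z}_{p-1}$, that is, $\mathbb{Z}_2$ when $p=3$ and $\mathbb{Z}_{p-1}$ when $p>3$. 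Finally, $G$ being finite, Theorem \ref{T:4.2} gives $\pi_2^S(K(G,1))\cong H_2(G)\times(\mathbb{Z}/2\mathbb{Z})^{r+k}$ with $r=0$ and $k=1$ (since $G_{ab}=\mathbb{Z}_{p-1}$ is a single cyclic group of even order), whence $\pi_2^S(K(G,1))\cong\mathbb{Z}/2\mathbb{Z}$.

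All the homotopy-theoretic content is supplied by the structure theorems of the previous sections, so the argument is essentially mechanical once the relevant group theory is in hand. The one genuinely external input — and the main obstacle — is the determination of the Schur multipliers $H_2(SL(n,\mathbb{F}_p))$, particularly the exceptional values $\mathbb{Z}_2$ at $(n,p)=(3,2)$ and $(4,2)$; these are classical (Steinberg's description of central extensions of Chevalley groups together with $K_2$ of finite fields, as recorded in the ATLAS) and should be cited rather than reproved.
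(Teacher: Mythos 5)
Your proposal is correct and follows essentially the same route as the paper: the split exact sequence $1\to SL(n,\mathbb{F}_p)\to GL(n,\mathbb{F}_p)\to \mathbb{F}_p^{*}\to 1$ together with Corollary \ref{C:4.7} and $\nabla(G)\cong\nabla(\mathbb{Z}_{p-1})\cong\mathbb{Z}_{p-1}$, plus the known Schur multipliers; the only (cosmetic) differences are that the paper quotes $H_2(GL(n,\mathbb{F}_p))$ directly where you derive it from $H_2(SL(n,\mathbb{F}_p))$ via Proposition \ref{P:3.3}, and it obtains part (ii) by dividing $\pi_3$ by $\Delta(G)\cong\Delta(\mathbb{Z}_{p-1})\cong\mathbb{Z}_{\frac{p-1}{2}}$ where you invoke Theorem \ref{T:4.2} (and Corollary \ref{C:4.16} for $p=2$). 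One small correction: your parenthetical claim that $H_2(SL(n,\mathbb{F}_p))=0$ for all $n\geq 3$ and any prime $p$ is false at $(n,p)=(3,2),(4,2)$, as your own $p=2$ computation records, but since you only use the vanishing for $p\geq 3$ the argument is unaffected.
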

\begin{proof}
\begin{itemize}
\item[(i)]
We know that $[GL(n, \mathbb{F}_p),GL(n, \mathbb{F}_p)]= SL(n, \mathbb{F}_p)$ unless $n=2$ and $p=2,3$. We have the following short exact sequence which splits, $1\to SL(n, \mathbb{F}_p)\to GL(n, \mathbb{F}_p)\to  {\mathbb{F}_p}^*\to 1$. The Schur multiplier of $GL(n, \mathbb{F}_p)$ is trivial for $(n,p)\notin \{(3,2), (4,2)\}$ and $\mathbb{Z}_2$ otherwise. Hence $\pi_3({SK(GL(n, \mathbb{F}_p),1}))\cong\mathbb{Z}_{p-1}$ for $p= 3,n\neq 2$ or $p > 3$ and is isomorphic to $\mathbb{Z}_2$ if $(n,p)\in \{(3,2),(4,2)\}$.
\item[(ii)] After noting that $\Delta(\mathbb{Z}_{p-1})\cong \mathbb{Z}_{\frac{p-1}{2}} $ for $p\neq 2$, the proof follows the proof of $(ii)$ of the previous proposition mutatis mutandis.
\end{itemize}
\end{proof}

In Example 3.7 of \cite{BL2}, the authors compute $\pi_3(SK(D_{2n}, 1))$. In the next corollary, we do the same for $\pi_2^S(K(D_{2n},1))$.

\begin{corollary} 
Let $D_{2n}$ be the Dihedral group of order $2n$ and $Q_8$ be the  Quaternion group, then
 \begin{align*}
 &\pi_2^S(K(D_{2n},1)) = \mathbb{Z}_2,\ \mbox{if\ n\ is\ odd}\\
 &\hspace{2.6cm}= \mathbb{Z}_2^3,\ \mbox{if\ n\ is\ even}\\
 &\pi_2^S(K(Q_8,1)) = \mathbb{Z}_2^2
 \end{align*}
\end{corollary}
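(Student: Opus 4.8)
The plan is to compute the three invariants via the structural tools developed earlier, treating the odd and even cases of $D_{2n}$ separately, and $Q_8$ by hand. For $D_{2n}$ with $n$ odd, the commutator subgroup is the cyclic group $C_n$, which has odd order, so $D_{2n}$ has a normal perfect... no, $C_n$ is abelian not perfect, so instead I would use that $C_n = (D_{2n})'$ has a complement (the order-$2$ subgroup generated by a reflection), so Corollary \ref{C:4.7} applies after passing to $\pi_2^S$: since $(D_{2n})_{ab}\cong \mathbb{Z}_2$ we get from Theorem \ref{T:4.2} that $\pi_2^S(K(D_{2n},1))\cong H_2(D_{2n})\times(\mathbb{Z}/2\mathbb{Z})^{r+k}$ with $r=0$ and $k=0$ (since $\mathbb{Z}_2$ contributes a cyclic group of even order, actually $k=1$). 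Here the known fact $H_2(D_{2n})=0$ for $n$ odd must be recalled, and the $(\mathbb{Z}/2\mathbb{Z})^{r+k}$ term with $r=0$, $k=1$ gives exactly $\mathbb{Z}_2$. So $\pi_2^S(K(D_{2n},1))\cong\mathbb{Z}_2$.

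For $n$ even, again $D_{2n}$ is finitely generated so Theorem \ref{T:4.2} applies directly: $(D_{2n})_{ab}\cong\mathbb{Z}_2\times\mathbb{Z}_2$ when $n$ is even, so $r=0$ and $k=2$, giving $\pi_2^S(K(D_{2n},1))\cong H_2(D_{2n})\times(\mathbb{Z}/2\mathbb{Z})^2$. Recalling the classical computation $H_2(D_{2n})\cong\mathbb{Z}_2$ for $n$ even, this yields $\mathbb{Z}_2\times\mathbb{Z}_2\times\mathbb{Z}_2=\mathbb{Z}_2^3$, as claimed.

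For $Q_8$, I would again invoke Theorem \ref{T:4.2}: $Q_8$ is finite, $(Q_8)_{ab}\cong\mathbb{Z}_2\times\mathbb{Z}_2$, so $r=0$, $k=2$, and $\pi_2^S(K(Q_8,1))\cong H_2(Q_8)\times(\mathbb{Z}/2\mathbb{Z})^2$. Since $H_2(Q_8)=0$ (the quaternion group has trivial Schur multiplier, being a group whose Sylow subgroups are cyclic or quaternion of the appropriate type — in fact any generalized quaternion group has trivial multiplier), we get $\pi_2^S(K(Q_8,1))\cong\mathbb{Z}_2^2$.

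The main obstacle is not the homotopy-theoretic machinery — Theorem \ref{T:4.2} does essentially all the work once the abelianizations are identified — but rather assembling and citing the correct values of the Schur multipliers $H_2(D_{2n})$ (which depends on the parity of $n$) and $H_2(Q_8)=0$, and being careful about the bookkeeping in the exponent $r+k$ of Theorem \ref{T:4.2}: one must correctly count $r$ (the free rank, which is $0$ in all three cases since the groups are finite) and $k$ (the number of even-order cyclic factors in the invariant-factor decomposition of the abelianization, which is $1$ for $D_{2n}$ with $n$ odd and $2$ for the other two cases). A secondary subtlety is confirming that the decomposition $(D_{2n})_{ab}\cong\mathbb{Z}_2\times\mathbb{Z}_2$ for $n$ even is in invariant-factor form so that $k=2$ rather than being miscounted.
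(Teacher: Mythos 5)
Your proposal is correct and follows essentially the same route as the paper: identify the abelianizations $(D_{2n})_{ab}$ and $(Q_8)_{ab}$ and apply Theorem \ref{T:4.2}, together with the known Schur multipliers $H_2(D_{2n})$ (trivial for $n$ odd, $\mathbb{Z}_2$ for $n$ even) and $H_2(Q_8)=0$. The paper's proof is just a terser version of yours, leaving the multiplier values and the $r+k$ bookkeeping implicit; your self-correction to $k=1$ in the odd case and $k=2$ in the other two is exactly the right count.
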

\begin{proof}
 It is well known that $(D_{2n})_{ab}\cong \mathbb{Z}_2$ when $n$ is odd, $(D_{2n})_{ab}\cong \mathbb{Z}_2\times\mathbb{Z}_2$ when $n$ is even and $(Q_8)_{ab} \cong \mathbb{Z}_2\times \mathbb{Z}_2$. Now using Theorem \ref{T:4.2} the result follows.
\end{proof}

Denoting the infinite general linear group with coefficients in a commutative ring $R$ as $GL(R)$, we have the following proposition.

\begin{prop} Let $R$ be a commutative ring with unit. If $R$ is a field,  Euclidean domain, semi-local ring or ring of integers of a number field, then $\pi_3(SK(GL(R),1))\cong H_2(GL(R))\times \nabla (K_1(R))$. In particular, $\pi_3(SK(GL(\mathbb{Z}),1))\\ \cong \mathbb{Z}_2\times \mathbb{Z}_2$.
\end{prop}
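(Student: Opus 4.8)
The plan is to show that for each of the listed classes of rings the commutator subgroup $E(R)=[GL(R),GL(R)]$ is perfect and splits off from $GL(R)$, and then to invoke Corollary~\ref{C:4.7}(i). Recall that for any commutative ring $R$ one has the short exact sequence
\begin{equation*}
1\longrightarrow E(R)\longrightarrow GL(R)\longrightarrow K_1(R)\longrightarrow 1,
\end{equation*}
where $K_1(R)=GL(R)_{\mathrm{ab}}$ and $E(R)$ is the subgroup generated by elementary matrices. By Whitehead's lemma $E(R)=[GL(R),GL(R)]$ and, again by Whitehead's lemma (every elementary matrix is a product of commutators of elementary matrices), $E(R)$ is a perfect group. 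So the hypothesis ``$G'$ has a complement in $G$'' of Corollary~\ref{C:4.7} will be met once we produce a splitting $K_1(R)\to GL(R)$, and the hypothesis ``$G_{\mathrm{ab}}$ finitely generated'' will be met once we know $K_1(R)$ is finitely generated for the rings in question. Granting both, Corollary~\ref{C:4.7}(i) gives $GL(R)\otimes GL(R)\cong \nabla(GL(R))\times (GL(R)\wedge GL(R))$, hence $\pi_3(SK(GL(R),1))\cong H_2(GL(R))\times \nabla(GL(R))$; and since by Proposition~\ref{P:2.4} $\nabla(GL(R))\cong\nabla(K_1(R))$, this is $H_2(GL(R))\times\nabla(K_1(R))$, as claimed.

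The key steps, in order, are: (1) identify $G'=E(R)$, note it is perfect, and identify $G_{\mathrm{ab}}=K_1(R)$; (2) for each listed class produce a ring-theoretic splitting of $GL(R)\to K_1(R)$. For a field $R=k$, $K_1(k)=k^\ast$ via the determinant, and the inclusion $k^\ast\hookrightarrow GL(R)$ as diagonal matrices $\mathrm{diag}(u,1,1,\dots)$ splits the determinant; the same diagonal embedding works whenever $SK_1(R)=0$, i.e. whenever $K_1(R)=R^\ast$, which holds for Euclidean domains, semi-local rings, and rings of integers $\mathcal{O}_F$ of number fields (here $SK_1(\mathcal{O}_F)=0$ by Bass--Milnor--Serre). (3) For each such $R$, observe $K_1(R)=R^\ast$ is finitely generated: a field's unit group need not be, but in fact we only need $K_1(R)_{\mathrm{tors}}$-structure for $\nabla$; more carefully, for the $\mathbb{Z}$ application $K_1(\mathbb{Z})=\mathbb{Z}/2$ is finite. (4) Apply Corollary~\ref{C:4.7}(i) together with Proposition~\ref{P:2.4} to conclude. (5) For the specific claim, $K_1(\mathbb{Z})=\mathbb{Z}/2$ so $\nabla(K_1(\mathbb{Z}))=\nabla(\mathbb{Z}/2)\cong\mathbb{Z}/2$, and it is classical (Milnor) that $H_2(GL(\mathbb{Z}))=H_2(SL(\mathbb{Z}))=K_2(\mathbb{Z})=\mathbb{Z}/2$; hence $\pi_3(SK(GL(\mathbb{Z}),1))\cong\mathbb{Z}/2\times\mathbb{Z}/2$.

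The main obstacle is the finite-generation hypothesis on $G_{\mathrm{ab}}=K_1(R)$ required by Corollary~\ref{C:4.7}: for a general field, Euclidean domain, or semi-local ring the unit group $R^\ast$ can fail to be finitely generated (e.g. $R=\mathbb{C}$), so strictly speaking Corollary~\ref{C:4.7} as stated does not apply. The remedy is to note that what is actually used is only that the relevant splitting of $1\to\nabla(G)\to G\otimes G\to G\wedge G\to 1$ descends from the abelian case; by Proposition~\ref{P:2.4}, $\nabla(GL(R))\cong\nabla(R^\ast)$ and $\Delta(GL(R))\cong\Delta(R^\ast)$ whenever $E(R)$ has a complement, and the splitting argument of Corollary~\ref{C:4.7}(i) only needs $A\otimes A\cong\nabla(A)\times(A\wedge A)$ for $A=R^\ast$, which holds for any abelian group $A$ whose torsion subgroup has bounded exponent---in particular this works without finite generation in all the listed cases. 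So the honest version of the proof replaces the appeal to Lemma~\ref{L:4.1} by this mild extension, and then runs exactly as above; for the $\mathbb{Z}$ statement no such care is needed since $K_1(\mathbb{Z})$ is finite.
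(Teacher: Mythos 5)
Your proposal is correct and takes essentially the same route as the paper: the paper likewise quotes the vanishing of $SK_1(R)$ for these rings to split $1\to E(R)\to GL(R)\to K_1(R)\to 1$, then (implicitly via Corollary \ref{C:4.7} and Proposition \ref{P:2.4}) identifies $\pi_3(SK(GL(R),1))\cong H_2(GL(R))\times \nabla(GL(R)_{ab})=H_2(GL(R))\times\nabla(K_1(R))$, and finishes with $K_1(\mathbb{Z})\cong\mathbb{Z}_2$ and $H_2(GL(\mathbb{Z}))\cong\mathbb{Z}_2$. The finite-generation caveat you raise about $K_1(R)$ is a fair point, but it concerns the paper's own silent appeal to Corollary \ref{C:4.7} just as much as your argument, and it is harmless for the stated $\mathbb{Z}$ computation since $K_1(\mathbb{Z})$ is finite.
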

\begin{proof}
First recall (\cite{CW}) that the special Whitehead group $SK_1(R)$ is trivial when $R$ is a Euclidean domain, semi-local ring or the ring of integers of a number field. Hence we have a split exact sequence $1\to E(R)\to GL(R)\to K_1(R)\to 1$. Thus $\pi_3(SK(GL(R),1))\cong \nabla(GL(R)_{ab})\times H_2(GL(R))\cong \nabla(K_1(R))\times H_2(GL(R))$. The second part follows after noting that $K_1(\mathbb{Z})\cong \mathbb{Z}_2$ and $H_2(GL(\mathbb{Z}))\cong \mathbb{Z}_2$.
\end{proof}

The next corollary can be derived from the above proposition, but instead we will put to use Proposition \ref{P:3.3}.

\begin{corollary}
Let $F_q$ be a finite field of $p^n$ elements where $p$ is a prime. Then
\begin{itemize}
\item[(i)]
\begin{equation*}
\pi_3(SK(GL(F_q),1))\cong \mathbb{Z}_{q-1}
\end{equation*}
\item[(ii)] 
\begin{align*}
&\pi_2^S(K(GL(F_q),1))\cong \mathbb{Z}_2\hspace{6mm} \mbox{when\ p\ is\ odd}\\
&\pi_2^S(K(GL(F_q),1)) \mbox{\ is\ trivial\ when\ p = 2}
\end{align*}

\end{itemize}
\end{corollary}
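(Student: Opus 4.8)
The plan is to reduce the computation to known $K$-theoretic facts about $GL(F_q)$ together with Proposition \ref{P:3.3}. First I would recall that the commutator subgroup of $GL(F_q)$ is the infinite elementary linear group $E(F_q)$, which coincides with $SL(F_q)$, that $E(F_q)$ is perfect, and that $GL(F_q)/E(F_q)\cong K_1(F_q)\cong F_q^{*}\cong \mathbb{Z}_{q-1}$. Since $E(F_q)$ is a perfect normal subgroup of $GL(F_q)$ with quotient $\mathbb{Z}_{q-1}$, and since $SK_1(F_q)=1$ means the extension $1\to E(F_q)\to GL(F_q)\to \mathbb{Z}_{q-1}\to 1$ splits, I can apply Corollary \ref{C:4.7}(i) to get $GL(F_q)\otimes GL(F_q)\cong \nabla(GL(F_q))\times (GL(F_q)\wedge GL(F_q))$, hence $\pi_3(SK(GL(F_q),1))\cong H_2(GL(F_q))\times \nabla(\mathbb{Z}_{q-1})$. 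Now $H_2(GL(F_q))=0$: this is a standard consequence of the homology stability and computations of Quillen for $GL$ over finite fields (the stable $H_2$ of $GL(F_q)$ vanishes, equivalently $K_2(F_q)=0$ for $F_q$ finite, and $H_2(GL(F_q))$ is detected by $H_2(E(F_q))=K_2(F_q)$ together with the split factor, which contributes nothing to $H_2$ since $\mathbb{Z}_{q-1}$ is cyclic). Since $\nabla(\mathbb{Z}_{q-1})\cong \mathbb{Z}_{q-1}$, part (i) follows: $\pi_3(SK(GL(F_q),1))\cong \mathbb{Z}_{q-1}$.

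For part (ii), I would use $\pi_2^S(K(GL(F_q),1))\cong \pi_3(SK(GL(F_q),1))/\Delta(GL(F_q))$, which is immediate from the definitions $\tilde J(G)=J(G)/\Delta(G)$ and the identifications in the introduction. Alternatively, and more in the spirit of the remark preceding the corollary, I would invoke Proposition \ref{P:3.3}(3) applied to the perfect normal subgroup $N=E(F_q)$: this gives an exact sequence $\pi_2^S(K(E(F_q),1))\to \pi_2^S(K(GL(F_q),1))\to \pi_2^S(K(\mathbb{Z}_{q-1},1))\to 0$. Since $E(F_q)$ is perfect, Corollary \ref{C:4.16} gives $\pi_2^S(K(E(F_q),1))\cong H_2(E(F_q))\cong K_2(F_q)=0$, so $\pi_2^S(K(GL(F_q),1))\cong \pi_2^S(K(\mathbb{Z}_{q-1},1))$. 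By Proposition \ref{P:2.4}(ii) we have $\Delta(GL(F_q))\cong \Delta(\mathbb{Z}_{q-1})$, and by Theorem \ref{T:4.2} (or directly from Corollary \ref{C:4.2}) $\pi_2^S(K(\mathbb{Z}_{q-1},1))\cong \nabla(\mathbb{Z}_{q-1})/\Delta(\mathbb{Z}_{q-1})$.

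It remains to compute $\nabla(\mathbb{Z}_m)/\Delta(\mathbb{Z}_m)$ for $m=q-1$. For a cyclic group $\mathbb{Z}_m=\langle t\rangle$ one has $\mathbb{Z}_m\otimes \mathbb{Z}_m\cong \mathbb{Z}_m$ generated by $t\otimes t$, with $\nabla(\mathbb{Z}_m)$ the whole group and $\Delta(\mathbb{Z}_m)$ generated by $(t\otimes t)^2$; hence $\nabla(\mathbb{Z}_m)/\Delta(\mathbb{Z}_m)\cong \mathbb{Z}_m/2\mathbb{Z}_m$, which is $\mathbb{Z}_2$ when $m$ is even and trivial when $m$ is odd. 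Now $q=p^n$, so $q-1$ is even exactly when $p$ is odd, and $q-1$ is odd exactly when $p=2$. This yields $\pi_2^S(K(GL(F_q),1))\cong \mathbb{Z}_2$ when $p$ is odd and is trivial when $p=2$, as claimed.

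The only genuine input beyond the machinery already developed in the paper is the vanishing $H_2(GL(F_q))=K_2(F_q)=0$; everything else is bookkeeping with the split exact sequence $1\to E(F_q)\to GL(F_q)\to \mathbb{Z}_{q-1}\to 1$, Corollary \ref{C:4.7}, Corollary \ref{C:4.16}, Proposition \ref{P:2.4}, and the elementary computation of $\nabla/\Delta$ for a cyclic group. The main obstacle, such as it is, is making sure the $K$-theoretic facts ($SK_1(F_q)=1$ so the sequence splits, and $K_2$ of a finite field vanishes) are cited correctly and that the perfectness of $E(F_q)$ is in force so that Corollary \ref{C:4.16} and Proposition \ref{P:3.3} genuinely apply; no hard computation is needed.
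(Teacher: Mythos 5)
Your argument is correct, but for part (i) it takes the route the paper explicitly sets aside. The paper says the corollary ``can be derived from the above proposition, but instead we will put to use Proposition \ref{P:3.3}'': its proof of (i) applies Proposition \ref{P:3.3}(1) to the perfect normal subgroup $E(F_q)$, notes $\pi_3(SK(E(F_q),1))\cong H_2(E(F_q))=K_2(F_q)=0$, and so gets $\pi_3(SK(GL(F_q),1))\cong \pi_3(SK(F_q^*,1))\cong F_q^*\otimes F_q^*\cong\mathbb{Z}_{q-1}$ directly, with the only external input being the vanishing of Milnor's $K_2$ of a finite field. Your (i) instead uses the splitting $1\to E(F_q)\to GL(F_q)\to\mathbb{Z}_{q-1}\to 1$ and Corollary \ref{C:4.7}, which is essentially the preceding proposition of the paper, and this buys you the clean formula $\pi_3\cong H_2(GL(F_q))\times\nabla(\mathbb{Z}_{q-1})$ --- but at the cost of the additional input $H_2(GL(F_q))=0$, which is strictly more than $K_2(F_q)=0$ and which you only sketch; to make it precise you should run the Lyndon--Hochschild--Serre spectral sequence for the split extension, where $H_2(\mathbb{Z}_{q-1})=0$, $H_1(\mathbb{Z}_{q-1},H_1(E))=0$ because $E(F_q)$ is perfect, and $H_0(\mathbb{Z}_{q-1},H_2(E))=K_2(F_q)=0$. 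For part (ii) your second route (Proposition \ref{P:3.3}(3) with $N=E(F_q)$, Corollary \ref{C:4.16} to kill the $E$-term, then the elementary computation of $\nabla(\mathbb{Z}_{q-1})/\Delta(\mathbb{Z}_{q-1})\cong\mathbb{Z}_{q-1}/2\mathbb{Z}_{q-1}$) coincides with the paper's proof, and your parity bookkeeping ($q-1$ even iff $p$ odd) matches; your first route via $\pi_3/\Delta$ and Proposition \ref{P:2.4}(ii) also works, since both $\nabla$ and $\Delta$ are compared with their abelianized counterparts through the same projection $G\otimes G\to G_{ab}\otimes G_{ab}$.
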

\begin{proof}
\begin{itemize}
\item[(i)]
Noting that Milnor's $K_2(F_q)$ is trivial (Corollary to Matsumoto's theorem, \cite{CW}) and $E(R)$ is perfect, it follows by Proposition \ref{P:3.3} that $\pi_3(SK(GL(F_q),1)) \cong \pi_3(SK(F_q^*,1))$. Since $F_q^*$ is abelian, we obtain that $\pi_3(SK(F_q^*,1)) \cong F_q^*\otimes F_q^* \cong  Z_{q-1}$.
\item[(ii)]
Again by Proposition \ref{P:3.3}, $\pi_2^S(K(GL(F_q),1)) \cong \pi_2^S(K(F_q^*,1))$ and since $F_q^*$ is abelian, $\pi_2^S(K(F_q^*,1))\cong (F_q^*\otimes F_q^*)/\Delta(F_q^*)$. When $p$ is odd, $\Delta(F_q^*) \cong \mathbb{Z}_{\frac{q-1}{2}}$ and $\pi_2^S(K(GL(F_q),1)) \cong \mathbb{Z}_2$. If $p = 2$, $\Delta(F_q^*) \cong \nabla(F_q^*) \cong F_q^*$ and $\pi_2^S(K(GL(F_q),1))$ is trivial.
\end{itemize}
\end{proof}

Using the bound for $p$ groups given by Green in \cite{JG} and Theorem 4, Chapter IX of \cite{S}, it follows that if $G$ is a finite group with order $|G|=p_1^{a_1}\cdots p_n^{a_n}$, then $|H_2(G)|\leq \prod p_i^{\frac{a_i(a_i-1)}{2}}$. Using this bound and Theorem \ref{T:4.2}, the next corollary follows easily.

\begin{corollary}
Let $G$ be a finite group. If $|G|=p_1^{a_1}\cdots p_n^{a_n}$, then $|\pi_2^S(K(G,1))|\leq 2^{r+k} \times \prod p_i^{\frac{a_i(a_i-1)}{2}}$.
\end{corollary}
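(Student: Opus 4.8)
The plan is to combine the universal bound on Schur multipliers of finite $p$-groups with the structural decomposition of $\pi_2^S(K(G,1))$ obtained earlier in the paper. By Theorem \ref{T:4.2}, for any finitely generated group $G$ (in particular any finite group) we have $\pi_2^S(K(G,1))\cong H_2(G)\times (\mathbb{Z}/2\mathbb{Z})^{r+k}$, where $r$ is the rank of the free part of $G_{ab}$ and $k$ is the number of even-order cyclic summands in the invariant factor (or primary) decomposition of $G_{ab}$. Taking cardinalities, $|\pi_2^S(K(G,1))| = 2^{r+k}\cdot |H_2(G)|$, so the corollary reduces at once to the claimed bound $|H_2(G)|\leq \prod_i p_i^{a_i(a_i-1)/2}$.

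For the bound on $|H_2(G)|$ I would argue as follows. Write $|G| = p_1^{a_1}\cdots p_n^{a_n}$. A standard fact (Theorem 4, Chapter IX of \cite{S}) is that $H_2(G)$ decomposes as the direct sum of its $p_i$-primary components, and the $p_i$-component of $H_2(G)$ is a quotient of (indeed embeds into, up to the usual transfer/restriction argument) $H_2(P_i)$ where $P_i$ is a Sylow $p_i$-subgroup of $G$; hence $|H_2(G)_{(p_i)}| \leq |H_2(P_i)|$. Since $|P_i| = p_i^{a_i}$, Green's bound from \cite{JG} gives $|H_2(P_i)| \leq p_i^{a_i(a_i-1)/2}$. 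Multiplying over $i$ yields $|H_2(G)| = \prod_i |H_2(G)_{(p_i)}| \leq \prod_i p_i^{a_i(a_i-1)/2}$, as required.

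Putting the two pieces together gives
\[
|\pi_2^S(K(G,1))| = 2^{r+k}\cdot|H_2(G)| \leq 2^{r+k}\cdot\prod_i p_i^{\frac{a_i(a_i-1)}{2}},
\]
which is exactly the assertion of the corollary.

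The genuine content here lies entirely in the two cited inputs — Green's bound for $p$-groups and the Sylow-wise decomposition of the Schur multiplier — so there is no real obstacle once Theorem \ref{T:4.2} is in hand; the only point requiring a little care is to make sure the quantities $r$ and $k$ appearing in the statement are the same $r$ and $k$ as in Theorem \ref{T:4.2}, i.e. read off from $G_{ab}$, and that $r=0$ here since $G$ is finite (so in fact $k$ alone controls the power of $2$, and $2^{r+k}=2^k$). I would state this last observation explicitly to avoid any ambiguity, but otherwise the proof is the one-line combination above.
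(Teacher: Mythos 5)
Your argument is exactly the paper's: it invokes Theorem \ref{T:4.2} to write $|\pi_2^S(K(G,1))| = 2^{r+k}|H_2(G)|$ and then bounds $|H_2(G)|$ by combining Green's bound for $p$-groups with the Sylow-wise decomposition of the Schur multiplier from \cite{S}, precisely the two inputs the paper cites. Your added remark that $r=0$ for a finite group is a correct (and harmless) refinement of the stated bound.
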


\end{document}